\documentclass{amsart}
 
\usepackage{amssymb,amsmath,latexsym,amscd,amsfonts}
\usepackage{mathrsfs}
\usepackage{hyperref}
\usepackage{cleveref}
\usepackage{mathtools}

\usepackage[margin=3.85cm]{geometry}

\usepackage{graphicx}
\usepackage[all]{xy}
\usepackage{xypic}
\usepackage{psfrag}
\usepackage[svgnames]{xcolor}

\numberwithin{equation}{section}

\newtheorem{theorem}{Theorem}[section]
\newtheorem{proposition}[theorem]{Proposition}
\newtheorem{corollary}[theorem]{Corollary}
\newtheorem{lemma}[theorem]{Lemma}

\theoremstyle{definition}
\newtheorem{definition}[theorem]{Definition}

\newtheorem{remark*}[theorem]{}
\theoremstyle{remark}
\newtheorem{remark}[theorem]{Remark}

\newcommand{\rar}{\rightarrow}

\newcommand{\C}{\mathbb C}

\newcommand{\tr}{\mathrm{tr}}

\def\la{\langle}
\def\ra{\rangle}


\begin{document}
\title{On orthogonal systems, two-sided bases and regular subfactors}
\subjclass[2010]{46L37}
\keywords{Orthogonal system, Pimsner-Popa basis, subfactor, regularity}

\author[K C Bakshi]{Keshab Chandra Bakshi} \address{Chennai
  Mathematical Institute, Chennai, INDIA} \email{bakshi209@gmail.com, kcbakshi@cmi.ac.in}

\thanks{The first named author was supported partially by a
  postdoctoral fellowship of the National Board of Higher Mathematics
  (NBHM), India.}

\author[V P Gupta]{Ved Prakash Gupta}
\address{School of Physical Sciences, Jawaharlal Nehru University, New Delhi, INDIA}
\email{vedgupta@mail.jnu.ac.in, ved.math@gmail.com}

\maketitle
\begin{abstract}
We prove that a regular subfactor of type $II_1$ with finite Jones
index always admits a two-sided Pimsner-Popa basis. This is preceded
by a pragmatic revisit of Popa's notion of orthogonal systems.
\end{abstract}

\section{Introduction}\label{intro}
Let $\mathcal{N}\subset \mathcal{M}$ be a unital inclusion of von
Neumann algebras equipped with a faithful normal conditional
expectation $\mathcal{E}$ from $\mathcal{M}$ onto $\mathcal{N}$. Then,
a finite set $\mathcal{B}:=\{\lambda_1,\ldots,\lambda_n\}\subset
\mathcal{M}$ is called a left Pimsner-Popa basis for $\mathcal{M}$
over $\mathcal{N}$ via $\mathcal{E}$ if every $x\in \mathcal{M}$ can
be expressed as $x=\sum_{i=1}^n \mathcal{E}(x\lambda^*_i)\lambda_i$ -
see \cite{PiPo, Popa, popa2, JS, Wat} and the references therein.
Similarly, $\mathcal{B}$ is called a right Pimsner-Popa basis for
$\mathcal{M}$ over $\mathcal{N}$ via $\mathcal{E}$ if every $x\in
\mathcal{M}$ can be expressed as
$x=\sum_{j=1}^n\lambda_j\mathcal{E}(\lambda^*_jx).$ And, $\mathcal{B}$
is said to be a two-sided basis if it is simultaneously a left and a
right Pimsner-Popa basis. {It is readily seen that a
  type $II_1$ subfactor that admits a two-sided basis is always
  extremal (\Cref{2sided-extremal}).} 

An extensively exploited result of Pimsner and Popa (from \cite{PiPo})
states that if $N\subset M$ is a subfactor of type $II_1$ with finite
Jones index (\cite{Jon}), then there always exists a left
(equivalently, a right) Pimsner-Popa basis for $M$ over $N$ via the
unique trace preserving conditional expectation $E_N: M \rar N$. As
noted above, non-extremal subfactors do not admit two-sided bases.
So, it is natural to ask whether there always exists a two-sided basis
for every finite index extremal subfactor or not.  {In
  fact, it has also been asked publicly by Vaughan Jones at various places,
  see, for instance the
  talk \footnote{\url{https://www.youtube.com/watch?v=x4JN6kSie4g}
    (51:30 onward)} by M. Izumi in the workshop organized in honour of
  V.~S.~Sunder's 60th birthday in Chennai during March-April 2012. \color{black}
  Given the fact that every irreducible regular subfactor of finite
  index is a group subfactor, it is not surprising that such a
  subfactor always admits a two-sided orthonormal basis, as was
  illustrated in \cite{Hong} (also see \cite{BG}) .  However, it seems
  to be a difficult question to answer in general. In this article, we
  answer this question in affirmative for all regular subfactors of
  type $II_1$ with finite Jones index {(without assuming extremality)}
  in: \smallskip

\noindent{\bf \Cref{two-sided}.} {\em Let $N\subset M$ be a regular
  subfactor of type $II_1$ with finite Jones index. Then, $M$ admits a
  two-sided basis over $N$.}

\smallskip

{As a consequence, we deduce that every finite index
  regular subfactor of type $II_1$ is extremal.}\smallskip

Recall that an inclusion $\mathcal{Q} \subset \mathcal{P}$ of von
Neumann algebras is said to be regular if its group of normalizers
$\mathcal{N}_{\mathcal{P}}(\mathcal{Q}):=\{u\in
\mathcal{U}(\mathcal{P}):u\mathcal{Q}u^*=\mathcal{Q}\}$ generates
$\mathcal{P}$ as von Neumann algebra, i.e.,
$\mathcal{N}_{\mathcal{P}}(\mathcal{Q})'' = \mathcal{P}$. Our
proof is essentially self contained and does not depend on any
structure theorem for regular subfactors.

An effort has been made to keep this article as self-contained as
possible. The reader is assumed only to have some basic knowledge of
subfactor theory, for instance, as discussed in the first few chapters
of \cite{JS}.

Here is a brief outline of the content of this article.

As mentioned in the abstract, we first revisit, in \Cref{systems},
Popa's (\cite{Popa}) notion of an {\em orthogonal system} for an
inclusion of von Neumann algebras $\mathcal{N}\subset \mathcal{M}$
with a faithful normal conditional expectation from $\mathcal{M}$ onto
$\mathcal{N}$. This generalizes the notion of an {\em orthonormal
  basis} for a subfactor $N\subset M$ of type $II_1$ introduced by
Pimsner and Popa in \cite{PiPo}.  Dropping orthogonality, Jones and
Sunder, in \cite{JS}, generalized the notion of {\em orthonormal
  basis} and gave another formulation of basis for $M$ over $N$ (as
recalled in the first paragraph of Introduction). Very much on the
lines of \cite{JS}, we introduce and discuss the notion of a {\em
  Pimsner-Popa system}, which generalizes Popa's notion of an {\em
  orthogonal system}.

If $\mathcal{N} \subset \mathcal{M}$ is an inclusion of finite von
Neumann algebras with a fixed faithful normal tracial state $\tr$ on
$\mathcal{M}$, then for any {\em Pimsner-Popa system}
$\{\lambda_1,\cdots,\lambda_k\}$ for $\mathcal{N} \subset \mathcal{M}$
with respect to the unique $\tr$-preserving conditional expectation
from $\mathcal{M}$ onto $\mathcal{N}$, it turns out that the positive
operator $f:=\sum_{i=1}^n\lambda_ie_1{\lambda}^*_i$ is a projection in
$\mathcal{M}_1$ (\Cref{proj1}), which we call the support of the
system, where as usual $e_1$ denotes the Jones projection for the
canonical basic construction $\mathcal{N} \subset \mathcal{M} \subset
\mathcal{M}_1$. An astute reader must have already noticed that, if
the support of $\{\lambda_i\}$ equals $1$, then it is in fact a {\em
  Pimsner-Popa basis} (in the sense of \cite{JS}) for $\mathcal{M}$
over $\mathcal{N}$.

  On the other hand, for a finite index subfactor $N\subset M$ of type
  $II_1$, we observe that for every projection $f\in M_1$ there exists
  a {\em Pimsner-Popa system} with support $f$ (\Cref{basis at
    p}). An useful consequence of this observation yields: \smallskip
  
  \noindent{\bf \Cref{orthogonalsystemtobasis}} {\em Let $N \subset M$
    be a subfactor of type $II_1$ with finite index. Then, any
    Pimsner-Popa system $\{\lambda_1,\cdots,\lambda_k\}$ for $M$ over
    $N$ can be extended to a Pimsner-Popa basis for $M$ over $N$.}\smallskip

  One application being that we deduce in \Cref{popa-question} that
  every subfactor of finite index admits a Pimsner-Popa basis (not
  necessarily orthonormal) containing at least $|G|$ many unitaries,
  where $G$ is the {\it generalized Weyl group} of the subfactor (as 
  defined in the next paragraph).

Given its importance, an important example of an {\em orthogonal
  system}  for a finite index subfactor $N \subset M$
that we illustrate (in \Cref{weyl-gp}) consists of a set containing coset
representatives of, what we call, the {\em generalized Weyl group} of
the subfactor $N \subset M$,
namely, the quotient group
$$ G:=\mathcal{N}_M(N)/\mathcal{U}(N)\mathcal{U}(N^{\prime}\cap M).$$
This group was first considered by Loi in \cite{Loi}. Clearly, this
group agrees with the Weyl group of the subfactor if the subfactor is
irreducible, i.e., $N'\cap M = \C$. Such coset representatives were
also considered in \cite{Cho, JoPo, PiPo, PiPo2, Kos, Hong} in the
irreducible setup and used effectively.

Our second important class of examples of {\em
  Pimsner-Popa systems} comes from unital inclusions of finite
dimensional $C^*$-algebras - see \Cref{examples}. This is done by
employing the formalism of {\em path algebras} introduced
independently by Sunder (\cite{Sun}) and Ocneanu (\cite{Oc}). Apart
from these, \Cref{systems} is also devoted to a detailed discussion of
certain other useful properties related to {\em Pimsner-Popa systems}.

Finally, in \Cref{regular-n-2-sided-basis}, we settle the question of
existence of two-sided basis for any finite index regular subfactor $N
\subset M$.  This is achieved through a twofold strategy, namely, we
first appeal to the formalism of {\em path algebras} to get hold of a
two-sided basis for $N'\cap M$ over $\C$ with respect to the
restriction of $\tr_M$ (in \Cref{finitedimensionalbasis}), which also
turns out to be a two-sided basis for $\mathcal{R}:=N\vee
(N^{\prime}\cap M)$ over $N$ (\Cref{finitedimension1}), and then,
thanks to the regularity of $N \subset M$, every set of coset
representatives of the generalized Weyl group of $N \subset M$ turns
out to be a two-sided orthonormal basis consisting of normalizing
unitaries for $M$ over $\mathcal{R}$ (\Cref{p=ep}).  Ultimately, with
an appropriate patching technique (\Cref{two-sided-basis-normalizer}),
we deduce (in \Cref{two-sided}) that the product of these two
two-sided bases forms a two-sided Pimsner-Popa basis for $M$ over $N$.
{ And finally, employing the two-sided bases mentioned
  above and Watatani's notion of index of a conditional expectation,
  we derive (in \Cref{integer-index}) that
\[
[M:N] = |G| \mathrm{dim}_{\C}(N'\cap M),
\]
where $G$ again denotes the generalized Weyl group of the subfactor $N
\subset M$.}


\section{Pimsner-Popa bases and systems}\label{systems}
Recall, from \cite{Popa}, that given a unital inclusion of von Neumann
algebras $\mathcal{N} \subset \mathcal{M}$ with a faithful normal
conditional expectation $\mathcal{E}$ from $\mathcal{M}$ onto
$\mathcal{N}$, a family $\{m_j\}_j$ in $\mathcal{M}$ is called a right
orthogonal system for $\mathcal{M}$ over $\mathcal{N}$ with respect to
$\mathcal{E}$ if $\mathcal{E}(m_i^*m_j)= {\delta}_{ij}f_j$ for some
projections $\{f_j\}_j$ in $ \mathcal{N}$. In this article, we will be
dealing only with finite right orthogonal systems.

\subsection{Pimsner-Popa systems}
 On the lines of \cite[$\S\,
  4.3$]{JS}, Popa's notion of {\em orthogonal systems}
generalizes naturally to the following:

\begin{definition}
  Let $\mathcal{N}\subset \mathcal{M}$ be a unital inclusion of von
  Neumann algebras with a faithful normal conditional expectation
  $\mathcal{E}$ from $\mathcal{M}$ onto $\mathcal{N}$. A finite subset
  $\{\lambda_j:j\in J\}$ in $\mathcal{M}$ will be called a right
  Pimsner-Popa system for $\mathcal{M}$ over $\mathcal{N}$ with
  respect to $\mathcal{E}$ if the matrix $Q=[q_{ij}]$ with entries
  $q_{ij}:=\mathcal{E}(\lambda^*_i\lambda_j)$ is a projection in
  $M_J(\mathcal{N})$.

  Such a Pimsner-Popa system will be called a
  right orthogonal system if $q_{ij}={\delta}_{i,j}q_j$ for some
  projections $\{q_j : j \in J\} \subset \mathcal{N}$. If each $q_j$ is the
  identity operator, then such an orthogonal system will be called a
  right orthonormal system.
\end{definition}
\begin{remark}
(1) Similarly, one defines left systems by
  considering the matrix $\big[
    \mathcal{E}(\lambda_i\lambda_j^*)\big]$ in $M_J({\mathcal{N}})$.  A
  collection which is both a left system and a right system will be
  called a two-sided system. 

(2) Hereafter, by a Pimsner-Popa (resp., an orthogonal) system we will
  always mean a right Pimsner-Popa (resp., a right orthogonal) system
  and will henceforth drop the adjective `right'. And, whenever the
  conditional expectation is clear from the context, we shall omit the
  phrase `with respect to $\mathcal{E}$'.\smallskip
   \end{remark}
 In this subsection, we systematically study these objects and their
 generalities in the spirit of Pimsner-Popa basis.

 Let $\mathcal{N}\subset \mathcal{M}$ be a unital inclusion of finite
 von Neumann algebras with a fixed faithful normal tracial state $\tr$
 on $\mathcal{M}$ and let  $E_{\mathcal{N}}$ denote the unique trace preserving
normal conditional expectation from $\mathcal{M}$ onto $\mathcal{N}$.  As is
 standard, $e_1$ will denote the Jones projection that implements the
 basic construction $\mathcal{N} \subset \mathcal{M} \subset
 \mathcal{M}_1$.

\begin{lemma}\label{proj1}
 Let $\mathcal{N}\subset \mathcal{M}$, $E_{\mathcal{N}}$ be as in the preceding
 paragraph and let $\{\lambda_1,\ldots,\lambda_k\}$ be a Pimsner-Popa
 system for $\mathcal{M}/\mathcal{N}$. Then, the positive operator
 $\sum_i\lambda_ie_1\lambda^*_i$ is a projection in $\mathcal{M}_1$.
\end{lemma}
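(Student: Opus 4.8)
The plan is to recognize $f:=\sum_i\lambda_ie_1\lambda_i^*$ as the range projection of a single operator. First I would record the cheap facts: each summand satisfies $\lambda_ie_1\lambda_i^*=(\lambda_ie_1)(\lambda_ie_1)^*$ (using $e_1=e_1^*=e_1^2$), so $f$ is positive and self-adjoint, and clearly $f\in\mathcal{M}_1$. It then remains to prove $f^2=f$. The naive route is to expand $f^2$ and apply the Jones relation $e_1\lambda_i^*\lambda_je_1=E_{\mathcal{N}}(\lambda_i^*\lambda_j)e_1=q_{ij}e_1$, which gives $f^2=\sum_{i,j}\lambda_iq_{ij}e_1\lambda_j^*$; since each $q_{ij}\in\mathcal{N}$ commutes with $e_1$, this equals $\sum_{i,j}\lambda_ie_1q_{ij}\lambda_j^*$. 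This does \emph{not} visibly collapse back to $f$, because there is no basis-type identity available to resolve $\sum_j q_{ij}\lambda_j^*$. This is exactly the crux, and it is where I would abandon the direct computation.

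Instead I would pass to the amplification $M_k(\mathcal{M}_1)$ and introduce the row $\Lambda:=[\lambda_1e_1,\ldots,\lambda_ke_1]$, viewed as a $1\times k$ matrix over $\mathcal{M}_1$ (equivalently, as a bounded operator $L^2(\mathcal{M})^{\oplus k}\to L^2(\mathcal{M})$). A one-line computation gives
\[
\Lambda\Lambda^*=\sum_i(\lambda_ie_1)(\lambda_ie_1)^*=\sum_i\lambda_ie_1\lambda_i^*=f,
\]
while the $(i,j)$ entry of $\Lambda^*\Lambda$ is $(\lambda_ie_1)^*(\lambda_je_1)=e_1\lambda_i^*\lambda_je_1=q_{ij}e_1$. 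Thus $\Lambda^*\Lambda=Q\cdot\mathrm{diag}(e_1,\ldots,e_1)$ inside $M_k(\mathcal{M}_1)$, where $Q=[q_{ij}]$.

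Now the hypothesis enters cleanly: $Q$ is a projection in $M_k(\mathcal{N})\subset M_k(\mathcal{M}_1)$ by assumption, $\mathrm{diag}(e_1,\ldots,e_1)$ is a projection in $M_k(\mathcal{M}_1)$ since $e_1$ is, and the two commute because each $q_{ij}\in\mathcal{N}$ commutes with $e_1$. The product of two commuting projections is again a projection, so $\Lambda^*\Lambda$ is a projection. I would then invoke the standard fact that $T^*T$ being a projection forces $T$ to be a partial isometry (indeed $\Lambda=\Lambda\Lambda^*\Lambda$, as one checks by computing $(\Lambda-\Lambda\Lambda^*\Lambda)^*(\Lambda-\Lambda\Lambda^*\Lambda)=0$ from $(\Lambda^*\Lambda)^2=\Lambda^*\Lambda$), so that $TT^*$ is also a projection. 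Applied to $T=\Lambda$, this yields that $f=\Lambda\Lambda^*$ is a projection in $\mathcal{M}_1$, as claimed. The only genuine subtlety is the conceptual step from the stubborn direct expansion to the matrix picture; once $\Lambda$ is set up, the argument reduces entirely to the two elementary $C^*$-algebraic facts above.
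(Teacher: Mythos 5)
Your proposal is correct and follows essentially the same route as the paper: the paper amplifies to $M_k(\mathcal{M}_1)$ using the matrix $V$ whose first row is $(\lambda_1 e_1,\ldots,\lambda_k e_1)$ and whose other rows vanish --- which is exactly your row operator $\Lambda$ in disguise --- and likewise concludes from $V^*V = QE = EQ$ being a product of commuting projections that $V$ is a partial isometry, whence $VV^*$, i.e.\ $\sum_i \lambda_i e_1 \lambda_i^*$, is a projection in $\mathcal{M}_1$. The only cosmetic difference is that you spell out the elementary fact that $T^*T$ being a projection forces $T$ to be a partial isometry, which the paper uses without comment.
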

\begin{proof}
  The idea of the proof is essentially borrowed from \cite{PiPo} and
  \cite{JS}.  Consider the projection
  $Q=[q_{ij}]: = [E_{\mathcal{N}}(\lambda_i^*\lambda_j)]$ in
  $M_k(\mathcal{N})$.  Let $v_i:=\lambda_ie_1$ for $1 \leq i \leq k$
  and $V\in M_k(\mathcal{M}_1) $ be the matrix given by
  $$V=\begin{bmatrix}
         v_{1} & v_2 & \cdots & v_n\\
         0 & 0 &\cdots & 0\\
         \vdots & \vdots & \ddots & \vdots\\
         0 & 0 & \cdots & 0 
        \end{bmatrix}.$$ 
   Now, since
  $v^*_iv_j= e_1 \lambda_i^* \lambda_j e_1 = q_{ij}e_1$, we see that
  $V^*V=  Q E =E Q,
 $ where $ E$ is the diagonal matrix $\text{diag}(e_1, \ldots, e_1)$
 in $M_k(\mathcal{M}_1)$. So, $V$ is a partial isometry in
 $M_k(\mathcal{M}_1)$. In particular, $VV^*$ is a projection in
 $M_k(\mathcal{M}_1)$, thereby implying that $\sum_iv_iv^*_i =
 \sum_i\lambda_ie_1\lambda^*_i$ is a projection in $\mathcal{M}_1$.
 \end{proof}

\begin{definition}\label{support}
 Let $\mathcal{N}\subset \mathcal{M}$ and $E_{\mathcal{N}}$ be as in
 \Cref{proj1}.  For any Pimsner-Popa system $\{\lambda_i: 1 \leq i
 \leq n\}$ for $\mathcal{M}$ over $\mathcal{N}$, the projection $\sum_{i=1}^n
 \lambda_i e_1 \lambda_i^*\in \mathcal{M}_1$ will be called the
 support of the system $\{\lambda_i: 1 \leq i \leq n\}$.
\end{definition}

\begin{remark}\label{subsystem}
  \begin{enumerate}
\item A subcollection of an orthogonal (resp., orthonormal) system is
  also an orthogonal (resp., orthonormal) system.

  \item A Pimsner-Popa system with support equal to $1$ turns out to
    be a Pimsner-Popa basis for $\mathcal{M}$ over $\mathcal{N}$ (as
    mentioned in \Cref{intro}).  For such a basis, the sum
    $\sum_{i=1}^n \lambda_i\lambda^*_i$ is independent of the basis
    (see \cite{Wat}) and is called the Watatani index of $\mathcal{N}
    \subset \mathcal{M}$.  This quantity is denoted by
    $\text{Index}_w(\mathcal{N}\subset \mathcal{M})$.

  If $N\subset M$ is a finite index subfactor of type $II_1$, then it
  is known that $\text{Index}_w(N\subset M)=[M:N]$ - see \cite{Wat}
\end{enumerate}\end{remark}
The following useful equivalence is folklore and  will be used on few occasions.
\begin{lemma}\label{basis-equivalence}
 Let $\mathcal{N}\subset \mathcal{M}$ and  $E_{\mathcal{N}}$ be as in
 \Cref{proj1}. Then, for any finite set
 $\{\lambda_1,\ldots,\lambda_n\}$ in $\mathcal{M}$, 
 $\{\lambda_i:1\leq i\leq n\}$ is a  Pimsner-Popa basis for
    $\mathcal{M}/\mathcal{N}$ if and only if
 $\sum_{i=1}^n \lambda_ie_1\lambda^*_i=1$.
\end{lemma}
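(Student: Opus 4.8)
The plan is to pass to the GNS Hilbert space $\mathcal{H}=L^2(\mathcal{M},\tr)$, on which $\mathcal{M}$ acts by left multiplication and the Jones projection $e_1$ is realized as the orthogonal projection onto the closed subspace $L^2(\mathcal{N},\tr)$. Writing $\widehat{x}$ for the image of $x\in\mathcal{M}$ in $\mathcal{H}$, the two facts I would lean on are: first, the defining property $e_1\,\widehat{y}=\widehat{E_{\mathcal{N}}(y)}$ for all $y\in\mathcal{M}$; and second, that the map $\mathcal{M}\ni y\mapsto\widehat{y}\in\mathcal{H}$ is injective (since $\tr$ is faithful) with dense range.

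First I would record the action of the operator $f:=\sum_{i=1}^n\lambda_ie_1\lambda_i^*$ on a generic vector $\widehat{x}$. Since left multiplication by $\lambda_i^*$ sends $\widehat{x}$ to $\widehat{\lambda_i^*x}$, applying $e_1$ gives $\widehat{E_{\mathcal{N}}(\lambda_i^*x)}$, and a further left multiplication by $\lambda_i$ yields $\widehat{\lambda_iE_{\mathcal{N}}(\lambda_i^*x)}$. Summing over $i$, one obtains the clean identity $f\,\widehat{x}=\widehat{\sum_{i=1}^n\lambda_iE_{\mathcal{N}}(\lambda_i^*x)}$ for every $x\in\mathcal{M}$.

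With this in hand the equivalence is almost immediate. By definition $\{\lambda_i\}$ is a Pimsner-Popa basis precisely when $x=\sum_{i=1}^n\lambda_iE_{\mathcal{N}}(\lambda_i^*x)$ for every $x\in\mathcal{M}$; by the injectivity of $y\mapsto\widehat{y}$ this holds if and only if $f\,\widehat{x}=\widehat{x}$ for every $x\in\mathcal{M}$, i.e.\ $f$ agrees with the identity operator on the dense subspace $\widehat{\mathcal{M}}$. For the forward direction I would then invoke that $f$ is a bounded operator (a finite sum of products of the bounded operators $\lambda_i$, $e_1$, $\lambda_i^*$), so that agreement with $1$ on a dense subspace forces $f=1$ as operators; the reverse direction is trivial by restriction to $\widehat{\mathcal{M}}$.

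The only point requiring care---and the closest thing to an obstacle---is this density-to-everywhere passage in the ``only if'' direction: the basis identity is an algebraic statement valid on $\widehat{\mathcal{M}}$, whereas $f=1$ is an operator equation on all of $\mathcal{H}$. Boundedness of $f$ (which does not even require $\{\lambda_i\}$ to be a system, and hence does not appeal to \Cref{proj1}) closes this gap cleanly, and the lemma follows.
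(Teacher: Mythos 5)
Your proof is correct, and it is exactly the standard argument this lemma calls for: the paper states the result without proof (labelling it folklore), and the intended justification is precisely your computation that $\bigl(\sum_{i=1}^n\lambda_ie_1\lambda_i^*\bigr)\widehat{x}=\widehat{\sum_{i=1}^n\lambda_iE_{\mathcal{N}}(\lambda_i^*x)}$ on $L^2(\mathcal{M},\tr)$, combined with faithfulness of $\tr$ (injectivity of $x\mapsto\widehat{x}$) and boundedness of the operator to pass from the dense subspace $\widehat{\mathcal{M}}$ to all of $L^2(\mathcal{M},\tr)$. You also correctly observe that no appeal to \Cref{proj1} is needed, since boundedness of the finite sum holds for an arbitrary finite subset of $\mathcal{M}$.
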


Unlike above characterization of a Pimsner-Popa basis
(\Cref{basis-equivalence}), the converse of \Cref{proj1} may not be
true; that is, if for some projection $f \neq 1$ in $\mathcal{M}_1$
there is a finite set $\{\lambda_i\}\subset \mathcal{M}$ satisfying
$\sum_i \lambda_ie_1{\lambda}^*_i=f$, then there is no obvious reason
why $\{\lambda_i\}$ should be a Pimsner-Popa system for
$\mathcal{M}/\mathcal{N}$. However, in some specific cases the
situation is better.
\begin{proposition}\label{proj2}
 Let $N\subset M$ be a subfactor of type $II_1$ with $[M:N]<\infty$, $\{\lambda_i: 1 \leq i \leq n\}$
 be a finite subset of $M$ and $f$ be a projection in $M_1$ satisfying
 the following three conditions:
 \begin{enumerate}
  \item $f\geq e_1$,
  \item $\sum_i\lambda_ie_1{\lambda}_i^*=f$ and
  \item $\{\lambda_i: 1 \leq i \leq n\}\subseteq \{f\}^{\prime}\cap M$.
 \end{enumerate}
Then, $\{\lambda_i:1 \leq i\leq n\}$ is a Pimsner-Popa system for $M/N$.
\end{proposition}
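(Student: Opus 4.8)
The plan is to verify directly that the matrix $Q=[q_{ij}]$ with $q_{ij}:=E_N(\lambda_i^*\lambda_j)$ is a projection in $M_n(N)$, since that is precisely the defining property of a Pimsner-Popa system for $M/N$. Half of this is automatic: because $q_{ij}^*=E_N(\lambda_i^*\lambda_j)^*=E_N(\lambda_j^*\lambda_i)=q_{ji}$, the matrix $Q$ is self-adjoint regardless of the hypotheses. Thus all the content of the proposition lies in establishing idempotency, namely $\sum_j q_{ij}q_{jk}=q_{ik}$ for all $i,k$, and my whole effort would be directed at this single identity.

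The device I would use is to multiply the desired identity on the right by the Jones projection $e_1$ and then feed in the three hypotheses one at a time. Using the standard relation $e_1 x e_1=E_N(x)e_1$ for $x\in M$ together with the fact that $e_1$ commutes with $N$, I would rewrite $q_{ij}e_1=e_1\lambda_i^*\lambda_j e_1$ and $q_{jk}e_1=e_1\lambda_j^*\lambda_k e_1$, so that the product telescopes:
\[
\sum_j q_{ij}q_{jk}e_1=\sum_j e_1\lambda_i^*\lambda_j e_1\lambda_j^*\lambda_k e_1 = e_1\lambda_i^*\Big(\sum_j\lambda_j e_1\lambda_j^*\Big)\lambda_k e_1 = e_1\lambda_i^* f\lambda_k e_1 = e_1\lambda_i^*\lambda_k f e_1 = e_1\lambda_i^*\lambda_k e_1 = q_{ik}e_1.
\]
Here hypothesis (2) converts the double sum into the operator $f$, hypothesis (3) permits commuting $f$ past $\lambda_k$, and hypothesis (1), which gives $fe_1=e_1$, makes $f$ disappear once it sits adjacent to $e_1$.

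Having obtained $\big(\sum_j q_{ij}q_{jk}\big)e_1=q_{ik}e_1$, I would finish by cancelling the $e_1$. Both sides lie in $N\subseteq M$, and the map $M\ni a\mapsto a e_1\in M_1$ is injective: if $ae_1=0$ then $a^*ae_1=0$, so by the Markov property $[M:N]^{-1}\tr(a^*a)=\tr_1(a^*a e_1)=0$, whence $a=0$ by faithfulness of the trace. This yields $\sum_j q_{ij}q_{jk}=q_{ik}$, i.e. $Q^2=Q$, so $Q$ is a projection and $\{\lambda_i\}$ is a Pimsner-Popa system. I do not expect any serious obstacle once the computation is set up; the only genuinely load-bearing points are recognizing that one should test the identity against $e_1$ and that the three hypotheses must be applied in exactly the order above, together with the (routine but essential) justification that $e_1$ may be cancelled because all the quantities involved sit in $M$ and the trace is faithful.
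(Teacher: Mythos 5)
Your proposal is correct and follows essentially the same route as the paper: both verify idempotency of $Q=[E_N(\lambda_i^*\lambda_j)]$ by multiplying by $e_1$, telescoping via $e_1xe_1=E_N(x)e_1$, and applying hypotheses (2), (3), (1) in that order. The only cosmetic difference is at the cancellation step, where the paper cites the uniqueness part of the Pushdown Lemma \cite[Lemma 1.2]{PiPo} while you prove the injectivity of $M\ni a\mapsto ae_1$ directly from the Markov property and faithfulness of the trace --- the same fact, justified by hand.
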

\begin{proof}
 Let $q_{ij} := E_N(\lambda^*_i\lambda_j)$ for $1 \leq i, j \leq
 n$. Clearly, $q_{ij}^* = q_{ji}$ and  we have
\begin{eqnarray*}
 \big(\sum_kq_{ik}q_{kj}\big)e_1
  & = & \bigg(\sum_k E_N(\lambda^*_i\lambda_k)E_N(\lambda^*_k\lambda_j)\bigg)e_1\\
  & = & \Bigg(\sum_k E_N\Big(\lambda^*_i\lambda_kE_N(\lambda^*_k \lambda_j)\Big)\Bigg)e_1\\
  & = & \sum_k e_1\lambda^*_i\lambda_kE_N(\lambda^*_k\lambda_j)e_1\\
  & = & \sum_k e_1\lambda^*_i\lambda_ke_1\lambda^*_k\lambda_je_1\\
  & = & e_1\lambda^*_if\lambda_je_1\\
  & = & e_1f\lambda^*_i\lambda_je_1\\
  & = & q_{ij}e_1
 \end{eqnarray*}
for all $1 \leq i, j\leq n$. So, by the uniqueness part of the
Pushdown Lemma \cite[Lemma 1.2]{PiPo}, we deduce that
$\sum_kq_{ik}q_{kl} = q_{ij}$ for all $1 \leq i, j \leq n$. Thus, the
matrix $Q:=[q_{ij}]$ is a projection in $M_n(N)$.  This completes the
proof.
\end{proof}
The following observation is the crux of this section.
\begin{proposition}\label{basis at p}
 Let $N\subset M$ be as in \Cref{proj2}. Then, for any projection
 $f\in M_1$, there exists a Pimsner-Popa system $\{{\lambda}_1,\ldots,
 {\lambda}_n\}$ for $ M/N$ with support equal to $f$.
\end{proposition}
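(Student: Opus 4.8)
The plan is to realize the given projection $f$ as an orthogonal sum of pieces lying below $e_1$ and to manufacture each summand in the form $\lambda_k e_1 \lambda_k^*$ with $\lambda_k \in M$. Since $N \subset M$ is a finite index subfactor of type $II_1$, the algebra $M_1 = \la M, e_1 \ra$ is again a type $II_1$ factor and $e_1$ has trace $[M:N]^{-1} > 0$. Hence, by the comparison theory of projections in a type $II_1$ factor, I would write $f = \sum_{k=1}^n f_k$ as a sum of mutually orthogonal subprojections with $\tr_{M_1}(f_k) \le \tr_{M_1}(e_1)$, so that each $f_k$ is Murray--von Neumann subequivalent to $e_1$; concretely, I choose partial isometries $w_k \in M_1$ with $w_k^* w_k = f_k$ and $w_k w_k^* \le e_1$.

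Next I would push these partial isometries down into $M$. Because $w_k w_k^* \le e_1$, the range of $w_k$ sits under $e_1$, so $e_1 w_k = w_k$ and thus $w_k^* = w_k^* e_1 \in M_1 e_1 = M e_1$. By the Pushdown Lemma \cite[Lemma 1.2]{PiPo} there is then a unique $\lambda_k \in M$ with $w_k^* = \lambda_k e_1$ (equivalently $w_k = e_1 \lambda_k^*$). Consequently $\lambda_k e_1 \lambda_k^* = (\lambda_k e_1)(\lambda_k e_1)^* = w_k^* w_k = f_k$, and summing over $k$ gives $\sum_k \lambda_k e_1 \lambda_k^* = \sum_k f_k = f$. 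Thus $\{\lambda_1, \ldots, \lambda_n\}$ has support $f$ in the sense of \Cref{support}.

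It remains --- and this is the real content --- to check that $\{\lambda_k\}$ is genuinely a Pimsner-Popa system, since, as already emphasized after \Cref{basis-equivalence}, having the right support does not by itself force the matrix $Q = [q_{ij}]$, $q_{ij} := E_N(\lambda_i^* \lambda_j)$, to be a projection. Here I would argue exactly as in the proof of \Cref{proj2}. First, $q_{ij} e_1 = e_1 \lambda_i^* \lambda_j e_1 = w_i w_j^*$. Second, using $w_k = e_1 \lambda_k^*$ and $w_k^* \lambda_k^* = \lambda_k e_1 \lambda_k^* = f_k$, one gets $q_{ik} w_k = (q_{ik} e_1)\lambda_k^* = w_i w_k^* \lambda_k^* = w_i f_k$. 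Third, the left support of $w_j^*$ is $f_j \le f$, so $f w_j^* = w_j^*$. Combining these, $\big(\sum_k q_{ik} q_{kj}\big) e_1 = \sum_k q_{ik}(q_{kj} e_1) = \sum_k q_{ik} w_k w_j^* = w_i \big(\sum_k f_k\big) w_j^* = w_i f w_j^* = w_i w_j^* = q_{ij} e_1$. The uniqueness part of the Pushdown Lemma then yields $\sum_k q_{ik} q_{kj} = q_{ij}$, while $q_{ij}^* = q_{ji}$ is immediate; hence $Q$ is a self-adjoint idempotent, i.e.\ a projection in $M_n(N)$, and $\{\lambda_k\}$ is a Pimsner-Popa system with support $f$.

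The main obstacle I anticipate is precisely this last verification: the decomposition and pushdown steps are routine once one knows that $M_1$ is a factor, but it is the orthogonality of the $f_k$ (which simultaneously guarantees $\sum_k f_k = f$ and the identity $f w_j^* = w_j^*$) that makes $Q$ idempotent, and one must be careful to handle each product $q_{ik} w_k$ through the pushdown relations rather than by any naive manipulation.
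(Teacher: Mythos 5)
Your proof is correct, but it takes a genuinely different route from the paper's. The paper works in the amplification $M_n(M_1)$: it fixes in advance a projection $Q\in M_n(N)$ of the right trace, shows $\mathrm{diag}(f,0,\ldots,0)\sim QE$ with $E=\mathrm{diag}(e_1,\ldots,e_1)$, and extracts all the $v_i=\lambda_ie_1$ at once from a single partial isometry $V$; the Gram matrix $[E_N(\lambda_i^*\lambda_j)]$ is then the prescribed $Q$ by the uniqueness part of the Pushdown Lemma, so no idempotency computation is needed. You instead stay inside $M_1$, cutting $f$ into mutually orthogonal pieces $f_k$ with $\tr_{M_1}(f_k)\leq \tr_{M_1}(e_1)$, comparing each $f_k$ with $e_1$ separately, pushing down each $w_k$ individually, and then verifying $Q^2=Q$ by hand in the style of \Cref{proj2} --- whose hypotheses ($f\geq e_1$ and $\lambda_i\in\{f\}'\cap M$) you rightly do not assume; your substitute, the mutual orthogonality of the $f_k$ (which yields both $\sum_k f_k=f$ and $fw_j^*=w_j^*$), is exactly what makes the computation close, and all the pushdown manipulations check out. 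What the paper's version buys is control over the Gram matrix: since $Q$ may be any projection in $M_n(N)$ of the correct trace, the refinements of \Cref{ortho-basis-at-p} (orthogonal systems with prescribed $E_N(\lambda_i^*\lambda_i)$, orthonormal bases when $f=1$) come for free. What your version buys is elementarity --- no matrix amplification and only the comparison of each $f_k$ with $e_1$ --- and in fact slightly more than you claim: since the final projection of $w_j^*$ is $f_j$ and the initial projection of $w_i$ is $f_i$, one has $w_iw_j^*=(w_if_i)(f_jw_j^*)=0$ for $i\neq j$, so $q_{ij}e_1=\delta_{ij}\,w_iw_i^*$ and, by pushdown uniqueness, $q_{ij}=\delta_{ij}q_{ii}$ with each $q_{ii}$ a projection in $N$; thus your construction automatically produces an \emph{orthogonal} system with support $f$, and the idempotency verification for $Q$, while correct, is actually superfluous.
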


\begin{proof}
The proof that we give is inspired by \cite[Proposition
  4.3.3(a)]{JS}. Fix an $n\geq [M:N]$. Since $0\leq \tr(f)\leq 1$, we
obtain $n\geq \tr(f)[M:N]$. Since $M_n(N)$ is a $II_1$-factor, we can
choose a projection $Q\in M_n(N)$ with
$\tr_{M_n(N)}(Q)=\frac{\tr(f)[M:N]}{n}$. Consider the diagonal matrix
$P_1:=\text{diag}(f, 0, \ldots, 0)$ in $M_n(M_1)$. Then, $P_1$ is a
projection with $\tr_{M_N(M_1)}(P_1)=\frac{tr(f)}{n}$.

 On the other hand, consider the projection $P_0:=QE$ in $M_n(M_1)$,
 where $E:=\text{diag}(e_1, \ldots, e_1)$.  Clearly,
 \[
\tr_{M_n(M_1)}(P_0)=\frac{\sum_i \tr(q_{ii}e_1)}{n}= \frac{\sum_i
   \tr(q_{ii})}{n\, [M:N]} = \frac{\tr_{M_n(N)}(Q)}{[M:N]}=
 \frac{\tr(f)}{n};
 \]
 so that, $P_1 \sim P_0$ in $M_n(M_1)$. Hence,
 there exists a partial isometry $V\in M_n(M_1)$ such that $V^*V=P_0$
 and $VV^*=P_1.$ Note that, the condition $VV^* =P_1$ forces $V$ to be
 of the form
 $$
 V
 = \begin{bmatrix} v_{1} & v_2 & \cdots & v_n\\ 0 & 0 &\cdots &
 0\\ \vdots & \vdots & \ddots & \vdots\\ 0 & 0 & \cdots & 0
 \end{bmatrix}$$
 for some $v_i$'s in $M_1$. These $v_i$'s then satisfy
 $\sum_iv_iv^*_i=f$ and $v^*_iv_j=q_{ij}e_1$ for all $1 \leq i, j \leq
 n$. In particular, $v^*_iv_i=q_{ii}e_1\leq e_1$ for all $1 \leq i
 \leq n$. Thus, $|v_i| \leq e_1 \leq 1$ and this implies that $|v_i| =
 |v_i| e_1$; so that, by polar decomposition of $v_i$, we obtain $v_i
 = w_i |v_i| = w_i |v_i| e_1 = v_i e_1$ for every $ 1 \leq i \leq n$,
 where each $w_i$ is an appropriate partial isometry.

Therefore, by the Pushdown Lemma \cite[Lemma
   1.2]{PiPo}, we obtain a set $\{\lambda_1,\ldots,\lambda_n\}$ in $ M$
such that $v_i=\lambda_ie_1$ for all $ 1 \leq i \leq n$. In particular,
\[
q_{ij}e_1 = v_i^*v_j = e_1 \lambda_i^* \lambda_j e_1 = E_N(\lambda_i^*
\lambda_j) e_1;
\]
so that, by the uniqueness component of Pushdown Lemma, $q_{ij} =
E_N(\lambda_i^* \lambda_j)$ for all $1 \leq i, j \leq n$. So,
$\{\lambda_1, \ldots, \lambda_n\}$ is a   Pimsner-Popa system for
$M /N$ and its support is given by $\sum_i\lambda_ie_1\lambda^*_i =
\sum_iv_iv^*_i= f.$
\end{proof}

\begin{remark}\label{ortho-basis-at-p}
  \begin{enumerate}
\item An appropriate customization of above proof actually guarantees
the existence of an orthogonal system as well.  Indeed, if we
choose a projection $q\in N$ such that $\tr(q)=\frac{\tr(f)[M:N]}{n}$
and let $Q :=\text{diag}(q,q,\ldots,q) \in M_n(N)$ then clearly $Q$ is
a projection with $\tr_{M_n(N)}(Q)=\frac{\tr(f)[M:N]}{n}$. Then, a
Pimsner-Popa system $\{\lambda_1,\cdots,\lambda_n\}$ for $M/N$
provided by the proof of Theorem \ref{basis at p} is in fact an
orthogonal system for $M/N$ with support $f$.

\item We could even take a projection $Q=(1, \ldots, 1, q)\in M_n(N)$,
  where $q$ is a projection in $N$ with $\tr_N(q) = \frac{\tr(f)[M:N] -n
    +1}{n}$. This choice of $Q$ yields an orthogonal system
  $\{\lambda_i: 1 \leq i \leq n\}$ with support $f$ such that $E_N(\lambda_i^*
  \lambda_i) = 1$ for all $ 1 \leq i \leq n-1$ and $E_N(\lambda_n^*
  \lambda_n) = q$. In particular, if $f = 1$, then we obtain an
  orthonormal basis  (in the sense of \cite{PiPo}) for $M/N$.
  \end{enumerate}
\end{remark}
As mentioned in the Introduction, the following consequence can be
used to construct bases with some specific requirements as we shall
see, for instance,  in \Cref{popa-question}.
 \begin{theorem}\label{orthogonalsystemtobasis}
  Let $N\subset M$ be as in \Cref{proj2}.  Then, any Pimsner-Popa
 system $\{\lambda_1,\ldots,\lambda_k\}$ for
  $M/N$ can be extended to a Pimsner-Popa basis
  for $M/N$.
 \end{theorem}

 \begin{proof}
Let $f$ denote the support of the given system $\{\lambda_i: 1 \leq i \leq
k\}$.  By \Cref{basis at p},   there exists a Pimsner-Popa system
$\{\lambda_{k+1},\ldots,\lambda_{k+l}\}$ for $M/N$ with support
$1-f$. Then, 
$$
\sum_{i=1}^{k+l}\lambda_ie_1\lambda^*_i=\sum_{i=1}^{k}\lambda_ie_1\lambda^*_i
+ \sum_{i=1}^{l}\lambda_{k+i}e_1\lambda^*_{k+i}= f+(1-f)=1.
$$
Thus,  by \Cref{basis-equivalence}, $\{\lambda_1,\ldots,\lambda_k,\lambda_{k+1},\ldots,\lambda_{k+l}\}$
is a Pimsner-Popa  basis for $M/N$.
 \end{proof}

 \subsection{Examples of Pimsner-Popa systems}
 \subsubsection{\bf Pimsner-Popa bases and intermediate subalgebras}\label{bases-n-intermediate}

 Let $\mathcal{N} \subset \mathcal{M}$ be an inclusion of finite von
 Neumann algebras.  Let $\mathcal{P}$ be an intermediate von Neumann
 subalgebra, i.e., $\mathcal{N}\subset \mathcal{P}\subset
 \mathcal{M}$.  Fix a faithful normal tracial state on $\mathcal{M}$
 and let $e_{\mathcal{P}}$ denote the canonical Jones projection for
 the basic construction $\mathcal{P} \subset \mathcal{M} \subset
 \mathcal{P}_1$. Let $\{{\lambda}_i\}$ be a finite set in
 $\mathcal{P}$. If $\{{\lambda}_i\}$ is a Pimsner-Popa basis for
 $\mathcal{P}/\mathcal{N}$, then it is easy to see that
 $\{\lambda_i\}$ is a Pimsner-Popa system for
 $\mathcal{M}/\mathcal{N}$ with support $e_{\mathcal{P}}$. Indeed, for
 any $x\in \mathcal{M}$, we have
 \[
 \big(\sum_i\lambda_ie_1\lambda^*_i\big)x\Omega=
 \sum_i\lambda_iE^{\mathcal{M}}_{\mathcal{N}}(\lambda^*_ix)\Omega =
 \sum_i\lambda_iE^{\mathcal{P}}_{\mathcal{N}}(\lambda^*_i
 E^{\mathcal{M}}_{\mathcal{P}}(x))\Omega =
 E^{\mathcal{M}}_{\mathcal{P}}(x)\Omega=e_{\mathcal{P}}(x\Omega),
 \]
 where the second last equality holds because $\{\lambda_i\}$ is
     a basis for $\mathcal{P}$ over $\mathcal{N}$.

 \subsubsection{\bf Inclusion of finite dimensional $C^*$-algebras}\label{examples}
  Let $A_0 \subset A_1$ be a unital inclusion of finite dimensional
  $C^*$-algebras with dimension vectors  \(
\overrightarrow{\mathit{m}}= [m_1, \cdots, m_k]
\) and \( \overrightarrow{\mathit{n}}= [n_1, \cdots, n_l], \text{ respectively};
\)
 so that 
$${A_0}\cong M_{m_1}(\C)\oplus\dots\oplus
 M_{m_k}(\C)\ \text{and}\ {A_1}\cong M_{n_1}(\C)\oplus\dots\oplus
 M_{n_l}(\C).
 $$
 We briefly recall the formalism of {\em path
   algebras} associated to such an inclusion, introduced independently
 by Ocneanu (\cite{Oc}) and Sunder (\cite{Sun}). For details, we refer
 the reader to \cite[$\S 5.4$]{JS}.

 Let $\widehat{C}$ denote the set of minimal central projections of a
 finite dimensional $C^*$-algebra $C$. With this notation, let \(
 \widehat{A_0}=\{p^{(0)}_1,\ldots, p^{(0)}_{k}\}\) and \(
 \widehat{A_1}=\{p^{(1)}_1,\ldots, p^{(1)}_l\}.  \) Let $A_{-1}:=\C$
 and put $\widehat{\C}=\{\star \}.$ Consider the Bratteli diagram for
 $\C \subset A_0$ and let $\Omega_{0]}$ denote the set of all directed
   edges starting from $\star$ and ending at $p^{(0)}_i$ for some
   $1\leq i\leq k$. Similarly, let $\Omega_{[0,1]}$ denote the set of
   edges in the Bratelli diagram of $A_0\subset A_1$, and
   $\Omega_{1]}$ denote the set of all paths starting from $\star$ and
     ending at $p^{(1)}_j$ for some $1\leq j\leq l$. For any edge or
     path $\beta$, $s(\beta)$ and $r(\beta)$ denotes the source vertex
     and range vertex of $\beta$.  Let
     $\mathcal{H}_{0]},\mathcal{H}_{[0,1]}$ and $\mathcal{H}_{1]}$
         denote the corresponding Hilbert spaces with orthonormal
         bases indexed by $\Omega_{0]},\Omega_{[0,1]}$ and
           $\Omega_{1]}$, respectively.  Then, from \cite{Sun} (also
             see \cite{JS}), there exist $C^*$-subalgebras $B_0
             \subset B_1 \subseteq \mathcal{L}(\mathcal{H}_{1]})$ such
               that the inclusion $A_0\subset A_1$ is isomorphic to
               the inclusion $B_0\subset B_1$ - see \cite[Proposition
                 5.4.1(v)]{JS}. The pair $B_0\subset B_1$ is called
               the path algebra model of the pair $A_0\subset A_1$.

      Fix $\lambda,\mu\ \in \Omega_{1]}$ with same end points. Define
  $e_{\lambda,\mu}\in B_1$ by
  \[
  e_{\lambda,\mu}(\alpha,\beta)=\delta_{\lambda,\alpha}\delta_{\mu,\beta}\text{
    for all } \alpha,\beta\in \Omega_{1]}.
    \]
Then, the set $\{e_{\lambda,\mu}:\lambda,\mu\in \Omega_{1]}\text{ with
  } r(\lambda)=r(\mu)\}$ forms a system of matrix units for $B_1$ -
  see \cite[Proposition 5.4.1 (iv)]{JS}.
  
Now, let us assume that $A_0\subset A_1$ has a faithful tracial state
$\tr$ on $A_1$.  Let $E^{A_1}_{A_{0}}: A_1 \to A_0$ denote the unique $\tr$-preserving conditional
expectation. Let $\bar{t}^{(1)}$ be the
trace vector corresponding to $\tr$ and
$\bar{t}^{(0)}$ be the one
corresponding to ${\tr}_{|_{A_0}}.$ Then, by \cite{Sun} (also see \cite{JS}), we have
\begin{equation}\label{conditionalexpectation}
E_{B_0}(e_{\lambda,\mu})=\delta_{\lambda_{[0},\mu_{[0}} \displaystyle \frac{{\bar{t}^{(1)}}_{r(\lambda)}}{{\bar{t}^{(0)}}_{r(\lambda_{0]})}} e_{\lambda_{0]},\mu_{0]}}.
\end{equation}
Now, consider $I:=\{(\kappa,\beta):\kappa \in
\Omega_{[0,1]},\beta\in \Omega_{1]},  r(\kappa)=r(\beta) \}$ and, for each  $(\kappa, \beta) \in I$, let
\[
a_{\kappa,\beta}:=\sum_{\{\theta\in\Omega_{0]}: r(\theta)=s(\kappa)\}}
  e_{\theta\circ\kappa,\beta}.
  \]
  Then, by \cite[Proposition 5.4.3]{JS}, we have 
\begin{equation}\label{formula-for-conditional-expectation}
  E_{B_0}\Big(a_{\kappa,\beta}(a_{{\kappa}^{\prime},{\beta}^{\prime}})^*\Big)=
  \delta_{(\kappa,\beta),({\kappa}^{\prime},{\beta}^{\prime})}\displaystyle
  \frac{{{\bar{t}}^{(1)}}_{r(\kappa)}}{{{\bar{t}}^{(0)}}_{s(\kappa)}}\hspace{5mm}
  \sum_{\mathclap{\substack{\theta,\theta^{\prime}\in
        \Omega_{0]}\\r(\theta)=r(\theta^{\prime})=s(\kappa)}}}
    e_{\theta,\theta^{\prime}}.
\end{equation}
Further, for each $p\in \widehat{A_0}$, consider a projection $j_p\in
B_0$ (as in \cite[Lemma 5.7.3]{JS}) given
by
\[
j_p=\frac{1}{\bar{n}^{(0)}_p}\hspace{5mm}\sum_{\mathclap{\substack{\alpha,\alpha^{\prime}\in
      \Omega_{0]}\\r(\alpha)=r(\alpha^{\prime})=p}}}
  e_{\alpha,\alpha^{\prime}},
  \]
  where $\bigg( \bar{n}_p^{(0)} \bigg)^2 = \dim p A_0$, and let
  $\lambda_{\kappa,\beta}
  :=\Bigg(\bar{n}^{(0)}_{s(\kappa)}\displaystyle
  \frac{{{\bar{t}}^{(1)}}_{r(\kappa)}}{{{\bar{t}}^{(0)}}_{s(\kappa)}}\Bigg)^{-1/2}
  a_{\kappa,\beta}.$ Then, by Equation
  \ref{formula-for-conditional-expectation}, we obtain
  \[
  E_{B_0}
  \Big(\lambda_{\kappa,\beta}(\lambda_{\kappa^{\prime},\beta^{\prime}})^*\Big)
  =
  \delta_{(\kappa,\beta),(\kappa^{\prime},\beta^{\prime})}\hspace{3mm}j_{s(\kappa)}.
  \]
  Therefore, $\{\lambda_{\kappa,\beta}:(\kappa,\beta)\in I\}$ is a left
  orthogonal system for $A_1/A_0$.  This example will have a
      significant role to play in
      \Cref{regular-n-2-sided-basis}. \smallskip
  
  We will discuss some further useful properties of Pimsner-Popa systems
in \Cref{useful-properties}. Before that, let us digress to an
important class of examples of orthonormal systems consisting of
unitaries.

\subsection{Generalized Weyl group and orthonormal systems}\label{weyl-ons}
\(\ \)

In this subsection, we illustrate an important example of an
orthonormal system consisting of unitaries, which will attract a good
share of limelight of this article. Let $N \subset M$ be a subfactor
of type $II_1$ (which is not necessarily irreducible), let
$\mathcal{U}(N)$ (resp., $\mathcal{U}(M)$) denote the group of
unitaries of $N$ (resp., $M$) and $\mathcal{N}_M(N) := \{ u \in
\mathcal{U}(M): u N u^* = N\}$ denote the group of unitary normalizers
of $N$ in $M$. It is straightforward to see that $
\mathcal{U}(N)\mathcal{U}(N^{\prime}\cap M) \big(=
\mathcal{U}(N^{\prime}\cap M)\mathcal{U}(N)\big)$ is a normal subgroup
of $\mathcal{N}_M(N)$.

\begin{definition}\cite{Loi}
  The generalized Weyl group of a subfactor $N \subset M$ is defined as the quotient
  group
  \[
  G:=\mathcal{N}_M(N)/\mathcal{U}(N)\mathcal{U}(N^{\prime}\cap
  M).
  \]
\end{definition}

This group first appeared in \cite[Proposition 5.2]{Loi}.  Note that
the generalized Weyl group of an irreducible subfactor agrees with its
Weyl group, namely, the quotient group
$\mathcal{N}_M(N)/\mathcal{U}(N)$.

 The following two useful observations are well known for irreducible
 subfactors - see, for instance, \cite{Hong, JoPo,
   PiPo,PiPo2,Kos,Loi}. For the non-irreducible case, their proofs can
 be extracted readily from \cite[Proposition 5.2]{Loi}.

\begin{lemma}\label{weyl}\cite{Loi}
Let  $w \in \mathcal{N}_M(N)\setminus \mathcal{U}(N)\,
\mathcal{U}(N^{\prime}\cap M)$. Then,
$E_N(w) = 0$.

In particular, for any two elements $v, u \in
\mathcal{N}_M(N)$, $E_N(vu^*)=0 = E_N(v^*u)$ if 
$[u]\neq [v]$ in the generalized Weyl group $G$.
\end{lemma}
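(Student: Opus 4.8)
The plan is to prove the first assertion by contraposition: assuming $E_N(w)\neq 0$, I will show that $w$ must lie in $\mathcal{U}(N)\mathcal{U}(N'\cap M)$. The whole argument rests on a single intertwining relation coming from the normalizing condition. Write $x:=E_N(w)\in N$ and let $\phi:=\mathrm{Ad}(w^*)$ be the $*$-automorphism $a\mapsto w^*aw$ of $N$. Since $aw=w\phi(a)$ for every $a\in N$, applying $E_N$ and using its $N$-bimodularity on the two sides gives $ax=x\phi(a)$ for all $a\in N$; taking adjoints and replacing $a^*$ by $a$ yields the companion relation $x^*a=\phi(a)x^*$.

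From these two identities I would compute, for arbitrary $a\in N$, that $a(xx^*)=(ax)x^*=x\phi(a)x^*=x(x^*a)=(xx^*)a$, so that $xx^*$ commutes with $N$; a symmetric computation (substituting $b=\phi(a)$) shows the same for $x^*x$. Since $xx^*,x^*x\in N$ and $N$ is a factor, both lie in $N'\cap N=\C$, so $xx^*=\lambda 1$ and, comparing traces, $x^*x=\lambda 1$ as well, with $\lambda\geq 0$. If $\lambda=0$ then $x=E_N(w)=0$ and we are done, so assume $\lambda>0$; then $u:=\lambda^{-1/2}x$ is a unitary in $N$.

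The key step --- and the one I expect to require the most care --- is to convert this scalar behaviour of $E_N(w)$ into an actual group decomposition of $w$. Set $v:=u^*w$, which is again a normalizer of $N$ (a product of elements of $\mathcal{N}_M(N)$), and compute $E_N(v)=u^*x=\lambda^{-1/2}x^*x=\lambda^{1/2}1$, a nonzero scalar. Feeding $v$ into the analogue of the relation above (valid for any normalizer), namely $a\,E_N(v)=E_N(v)\,v^*av$, and cancelling the scalar $\lambda^{1/2}$ forces $a=v^*av$, i.e.\ $av=va$, for all $a\in N$; hence $v\in N'\cap M$ and, being unitary, $v\in\mathcal{U}(N'\cap M)$. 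Therefore $w=uv\in\mathcal{U}(N)\mathcal{U}(N'\cap M)$, contradicting the hypothesis on $w$, and so $E_N(w)=0$.

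Finally, the ``in particular'' statement follows immediately. Given $u,v\in\mathcal{N}_M(N)$ with $[u]\neq[v]$ in $G$, the element $vu^*$ again lies in $\mathcal{N}_M(N)$ (since $\mathcal{N}_M(N)$ is a group), and $[vu^*]=[v][u]^{-1}\neq[e]$ means precisely that $vu^*\notin\mathcal{U}(N)\mathcal{U}(N'\cap M)$; applying the first part to $vu^*$ gives $E_N(vu^*)=0$, and the same argument applied to $v^*u$ gives $E_N(v^*u)=0$.
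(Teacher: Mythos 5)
Your proof is correct, and it is worth noting that the paper itself contains no proof of this lemma: the authors simply point to Proposition 5.2 of Loi's paper \cite{Loi}, remarking that the non-irreducible case can be ``extracted readily'' from it. Your argument is a complete, self-contained version of exactly that standard line of reasoning: from $aw=w(w^*aw)$ and $N$-bimodularity of $E_N$ you get the intertwining relation $a\,x=x\,\phi(a)$ for $x=E_N(w)$ and $\phi=\mathrm{Ad}(w^*)|_N$; factoriality of $N$ then makes $xx^*$ and $x^*x$ central in $N$, hence scalars, equal by traciality; and the decisive step --- correctly adapted to the reducible setting --- is that after extracting the unitary $u=\lambda^{-1/2}x\in\mathcal{U}(N)$, the normalizer $v=u^*w$ has nonzero \emph{scalar} expectation, so the same intertwining relation with the scalar cancelled forces $v\in\mathcal{U}(N'\cap M)$ (rather than $v\in\C$, as one would conclude in the irreducible case), whence $w=uv\in\mathcal{U}(N)\mathcal{U}(N'\cap M)$. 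The deduction of the ``in particular'' clause uses only that $\mathcal{U}(N)\mathcal{U}(N'\cap M)$ is normal in $\mathcal{N}_M(N)$ --- which the paper records just before the lemma --- so that $[vu^*]=[v][u]^{-1}$ and $[v^*u]=[v]^{-1}[u]$. Every step checks out; in effect your write-up makes this part of the paper more self-contained than the original.
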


\begin{corollary}\label{weyl-gp}\cite{Loi}
Suppose $[M:N]<\infty$ and $G$ denotes the generalized Weyl group of the
subfactor $N \subset M$ . Then, any set of coset representatives $\{u_g
: g =[u_g] \in G\}$ of $G$ in $\mathcal{N}_M(N)$ forms a two-sided
orthonormal system for $M/N$. Also, $G$ is a finite group with order $
\leq [M:N]$.
\end{corollary}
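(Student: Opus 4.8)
The plan is to read off both assertions directly from \Cref{weyl}, supplemented by a short trace count on the support in the basic construction. Fix a set $\{u_g : g \in G\}$ of coset representatives, so that each $u_g \in \mathcal{N}_M(N)$ satisfies $[u_g] = g$.

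First I would verify the two-sided orthonormality relations on an arbitrary finite subfamily. Since $\mathcal{N}_M(N)$ is a group, for $g \neq h$ the products $u_g^* u_h$ and $u_g u_h^*$ again lie in $\mathcal{N}_M(N)$, and their classes in $G$ are $g^{-1}h \neq e$ and $gh^{-1} \neq e$ respectively; hence neither lies in $\mathcal{U}(N)\mathcal{U}(N^{\prime}\cap M)$. Applying the ``in particular'' clause of \Cref{weyl} to the pair $(u_g, u_h)$ gives $E_N(u_g^* u_h) = 0 = E_N(u_g u_h^*)$ whenever $g \neq h$, while for $g = h$ unitarity of $u_g$ yields $E_N(u_g^* u_g) = E_N(1) = 1 = E_N(u_g u_g^*)$. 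Thus $E_N(u_g^* u_h) = \delta_{g,h} = E_N(u_g u_h^*)$, which is exactly the requirement that $\{u_g\}$ be simultaneously a right and a left orthonormal system.

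For finiteness and the cardinality bound I would compute the support. Pick representatives $u_{g_1},\ldots,u_{g_m}$ of $m$ distinct cosets. The relations just established show that the projections $u_{g_i} e_1 u_{g_i}^*$ are mutually orthogonal: for $i \neq j$,
\[
(u_{g_i} e_1 u_{g_i}^*)(u_{g_j} e_1 u_{g_j}^*) = u_{g_i}\, E_N(u_{g_i}^* u_{g_j})\, e_1 u_{g_j}^* = 0,
\]
using $e_1 x e_1 = E_N(x) e_1$. Hence the support $f = \sum_{i=1}^m u_{g_i} e_1 u_{g_i}^*$ (a projection by \Cref{proj1}) is a sum of $m$ orthogonal projections, each of trace $\tr(u_{g_i} e_1 u_{g_i}^*) = \tr(e_1) = [M:N]^{-1}$ by traciality and unitarity. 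Therefore $\tr(f) = m\,[M:N]^{-1}$, and since $f \leq 1$ this forces $m \leq [M:N]$. As this bounds the number of distinct cosets appearing in any finite selection, $G$ must be finite with $|G| \leq [M:N]$, and consequently the full family $\{u_g : g \in G\}$ is a genuine (finite) two-sided orthonormal system.

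I do not anticipate a serious obstacle: \Cref{weyl} does all the algebraic work, and the only quantitative input is the elementary identity $\tr(e_1) = [M:N]^{-1}$ together with the orthogonality of the projections $u_{g_i} e_1 u_{g_i}^*$. The one point that demands care is the logical ordering, since the paper's definition of an orthonormal system presupposes a \emph{finite} family; I would therefore first establish the orthonormality relations for arbitrary finite subfamilies, then deduce finiteness of $G$ from the trace bound, and only afterward declare the whole family a two-sided orthonormal system.
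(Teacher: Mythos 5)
Your proof is correct. Note that the paper offers no proof of this corollary at all---it is attributed to \cite{Loi} with the remark that the argument ``can be extracted readily'' from Proposition 5.2 there---and what you have written is precisely the intended extraction: the two-sided orthonormality relations $E_N(u_g^*u_h)=\delta_{g,h}=E_N(u_gu_h^*)$ follow immediately from the ``in particular'' clause of \Cref{weyl}, and the bound $|G|\leq [M:N]$ follows from the standard count using the mutually orthogonal projections $u_{g_i}e_1u_{g_i}^*$, each of trace $[M:N]^{-1}$ in $M_1$. Your care with the logical ordering (orthonormality on arbitrary finite subfamilies first, then finiteness of $G$ from the trace bound, and only then declaring the whole family an orthonormal system) is a genuine refinement, since the paper's definition of a Pimsner-Popa system does presuppose a finite index set.
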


\begin{corollary}\label{popa-question}
Every finite index subfactor  of type $II_1$ 
admits a Pimsner-Popa basis  containing
at least $|G|$ many unitaries.
\end{corollary}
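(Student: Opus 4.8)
The plan is to assemble the two principal results already at our disposal: the fact that coset representatives of $G$ form a two-sided orthonormal system (\Cref{weyl-gp}) together with the extension theorem for Pimsner-Popa systems (\Cref{orthogonalsystemtobasis}). First I would fix a complete set of coset representatives $\{u_g : g = [u_g] \in G\}$ of the generalized Weyl group $G = \mathcal{N}_M(N)/\mathcal{U}(N)\mathcal{U}(N' \cap M)$ inside $\mathcal{N}_M(N)$. Since $[M:N] < \infty$, \Cref{weyl-gp} guarantees that $G$ is finite and that this set is a two-sided orthonormal system for $M$ over $N$. Crucially, each $u_g$ lies in $\mathcal{N}_M(N) \subseteq \mathcal{U}(M)$, so these $|G|$ elements are all unitaries.

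Next I would observe that an orthonormal system is in particular a Pimsner-Popa system: when $q_{ij} = E_N(u_{g_i}^* u_{g_j}) = \delta_{ij}\,1$, the associated matrix $Q = [q_{ij}]$ is simply the identity of $M_{|G|}(N)$, which is visibly a projection. Thus the coset representatives $\{u_g : g \in G\}$ constitute a Pimsner-Popa system for $M/N$ to which \Cref{orthogonalsystemtobasis} applies. Invoking that theorem, I would extend $\{u_g : g \in G\}$ to a Pimsner-Popa basis $\{u_g : g \in G\} \cup \{\lambda_1, \ldots, \lambda_l\}$ for $M$ over $N$.

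Finally, this basis contains the $|G|$ unitaries $\{u_g\}$ among its members; the adjoined elements $\lambda_1, \ldots, \lambda_l$ need not be unitary, but that is irrelevant to the stated bound. Hence $M$ admits a Pimsner-Popa basis with at least $|G|$ unitaries, as claimed. I do not expect any genuine obstacle here: all of the substantive work has already been absorbed into \Cref{weyl-gp}, which establishes the orthonormality and finiteness of the normalizing system, and into \Cref{orthogonalsystemtobasis}, which furnishes the passage from a system to a basis. The corollary is then a direct splicing of the two, the only point requiring a word being the (trivial) remark that an orthonormal system qualifies as a Pimsner-Popa system so that the extension theorem is applicable.
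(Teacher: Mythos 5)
Your proposal is correct and follows essentially the same route as the paper's own proof: take the coset representatives of $G$, which form an orthonormal system of unitaries by \Cref{weyl-gp}, and extend to a Pimsner-Popa basis via \Cref{orthogonalsystemtobasis}. The only extra step you spell out---that an orthonormal system is a Pimsner-Popa system---is already built into the paper's definitions (an orthogonal system is defined as a special case of a Pimsner-Popa system), so nothing further is needed.
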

\begin{proof}
By \Cref{weyl-gp}, there exists an orthonormal system for $M/N$
consisting of unitaries. Then, by \Cref{orthogonalsystemtobasis}, this orthonormal
system can be extended to a Pimsner-Popa basis for $M/N$. This completes the proof.
  \end{proof}

\begin{remark}
  \Cref{popa-question} could be related somewhat to a recent question
  asked by Popa in \cite{popa3} about the maximum number of unitaries
  possible in an orthonormal basis (in the sense of \cite{PiPo}) of a
  given subfactor.  It, at least, tells us that every finite index
  subfactor $N \subset M$ of type $II_1$ always admits a Pimsner-Popa
  basis (not necessarily orthonormal) containing at least $|G|$ many
  unitaries. 
\end{remark}

 In view of \Cref{popa-question}, calculating cardinality of $G$
 becomes quite relevant.  However, in practice, we are yet to find a
 suitable way to calculate the cardinality of $G$. Since the
 generalized Weyl group is the same as the Weyl group of an irreducible
 subfactor, it is always non-trivial for such a subfactor.

 \subsection{Some useful properties related to  Pimsner-Popa systems}\label{useful-properties}\( \ \)

Let $(N,P,Q,M)$ be a quadruple of $II_1$-factors, i.e., $N \subset P,
Q \subset M$, with $[M:N]< \infty$. Let $\{\lambda_i:i\in I\}$ and
$\{\mu_j:j\in J\}$ be (right) Pimsner-Popa bases for $P/N$ and $Q/N$,
respectively. Consider two auxiliary operators $p(P,Q)$ and $p(Q,P)$
(as in \cite{BDLR2017}) given by
\[
p(P,Q)= \sum_{i,j}{\lambda_i}\mu_j e_1 {\mu}^*_j{\lambda}^*_i\quad \text{and}\quad
p(Q,P)= \sum_{i,j}\mu_j \lambda_i e_1 {\lambda}^*_i {\mu}^*_j.
\]
 By \cite[Lemma 2.18]{BDLR2017}, $p(P,Q)$ and $p(Q,P)$ are both
independent of choice of bases. And, by \cite[Proposition
  2.22]{BDLR2017}, $Jp(P,Q)J = p(Q,P)$, where $J$ is the usual modular
conjugation operator on $L^2(M)$; so that, $\|p(P,Q)\| =
\|p(Q,P)\|$. Let us denote this common value by $\lambda$.

\begin{proposition}
 Let $(N,P,Q,M)$ be a quadruple of type $II_1$ factors
such that $N'\cap M =\C$ and  $[M:N]<\infty$,  and let $\{\lambda_i:i\in I\}$ be a Pimsner-Popa
 basis for $P/N$. Then, the following hold:
\smallskip

 (1) $\big\{\frac{1}{\sqrt{\lambda}}\lambda_i:i\in I\big\}$ is a
 Pimsner-Popa system for $M/Q$ with support
 $\frac{1}{\lambda}p(P,Q)$.\smallskip

(2) If $(N,P,Q,M)$ is a commuting square, then $\{\lambda_i\}$
 can be extended to a Pimsner-Popa basis for $M/Q.$
\end{proposition}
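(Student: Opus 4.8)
The plan for (1) is to verify that $f:=\frac1\lambda\,p(P,Q)$ is a projection in the basic construction $\langle M,e_Q\rangle$ of the finite-index subfactor $Q\subset M$ (finite index since $N\subseteq Q$) meeting the three hypotheses of \Cref{proj2} for the set $\{\frac{1}{\sqrt\lambda}\lambda_i\}$, where $e_Q$ denotes the Jones projection of $Q\subset M$. Two preliminary identities make this transparent. First, since $\{\mu_j\}$ is a right basis for $Q/N$, exactly the computation used for intermediate subalgebras (\Cref{bases-n-intermediate}) gives $\sum_j\mu_j e_1\mu_j^*=e_Q$; hence $p(P,Q)=\sum_i\lambda_i e_Q\lambda_i^*$, and so the support of $\{\frac{1}{\sqrt\lambda}\lambda_i\}$ as a system for $M/Q$ is $\frac1\lambda\sum_i\lambda_i e_Q\lambda_i^*=f$, which is hypothesis (2). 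Second, for $u\in\mathcal{U}(P)$ the set $\{u\lambda_i\}$ is again a right basis for $P/N$ (apply the reconstruction formula to $u^*x$), so by the basis-independence of $p(P,Q)$ (\cite[Lemma 2.18]{BDLR2017}) we get $p(P,Q)=u\,p(P,Q)\,u^*$; letting $u$ range over $\mathcal{U}(P)$ shows $p(P,Q)\in P'\cap\langle M,e_Q\rangle$, so each $\lambda_i\in P$ commutes with $f$, which is hypothesis (3).

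The crux, and the one place where irreducibility is essential, is to show that $f$ is a projection with $f\ge e_Q$. Using $e_Q\lambda_i^* e_Q=E_Q(\lambda_i^*)e_Q$ one first computes $p(P,Q)\,e_Q=d\,e_Q$ with $d:=\sum_i\lambda_i E_Q(\lambda_i^*)\in M$. The decisive claim is that $d\in N'\cap M=\C$: for $x\in N$ we have $x\in P\cap Q$, so $x$ commutes with $p(P,Q)$ (as $p(P,Q)\in P'$) and with $e_Q$ (as $x\in N\subseteq Q$), and multiplying $p(P,Q)e_Q=de_Q$ on the left by $x$ gives $(xd-dx)e_Q=0$, whence $xd=dx$; thus $d$ is a nonnegative scalar. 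Since $p(P,Q)\in P'$ commutes with each $\lambda_i^*$ and $e_Q\,p(P,Q)=(p(P,Q)e_Q)^*=d\,e_Q$, one then obtains
\[
p(P,Q)^2=\sum_i\lambda_i e_Q\lambda_i^*\,p(P,Q)=\sum_i\lambda_i\,e_Q p(P,Q)\,\lambda_i^*=d\sum_i\lambda_i e_Q\lambda_i^*=d\,p(P,Q).
\]
As $p(P,Q)\ne0$, it is therefore $d$ times a nonzero projection, so $d=\|p(P,Q)\|=\lambda$ and $f$ is a projection; moreover $f e_Q=\frac1\lambda p(P,Q)e_Q=e_Q$, giving $f\ge e_Q$ (hypothesis (1)). \Cref{proj2} then yields that $\{\frac{1}{\sqrt\lambda}\lambda_i\}$ is a Pimsner-Popa system for $M/Q$ with support $\frac1\lambda p(P,Q)$. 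I expect the scalar-ness of $d$ (equivalently, that $\frac1\lambda p(P,Q)$ is a projection) to be the main obstacle, since without $N'\cap M=\C$ the operator $d$ need not be central.

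For (2), the plan is to show that a commuting square forces $\lambda=1$, after which part (1) says $\{\lambda_i\}$ itself is a Pimsner-Popa system for $M/Q$, and \Cref{orthogonalsystemtobasis} (applied to $Q\subset M$) extends it to a Pimsner-Popa basis for $M/Q$. The commuting square condition gives $E_Q|_P=E_N|_P$, so $d=\sum_i\lambda_i E_Q(\lambda_i^*)=\sum_i\lambda_i E_N(\lambda_i^*)=1$, the last equality being the right-basis reconstruction formula for $P/N$ evaluated at $1\in P$. Since $\lambda=d$ by the computation above, this gives $\lambda=1$, completing the argument.
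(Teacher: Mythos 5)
Your proof is correct, and at the skeleton level it is the same as the paper's: both arguments reduce the statement to \Cref{proj2} applied to the finite-index subfactor $Q\subset M$, by exhibiting $\frac{1}{\lambda}p(P,Q)=\frac{1}{\lambda}\sum_i\lambda_ie_Q\lambda_i^*$ as a projection dominating $e_Q$ and commuting with each $\lambda_i$, and then (for part (2)) extending via \Cref{orthogonalsystemtobasis}. The substantive difference is how these three hypotheses are obtained. The paper imports all of them from \cite{BDLR2017}: Lemma 3.2 there for the projection property, Lemma 3.4 for $e_Q\leq\frac{1}{\lambda}p(P,Q)$, Proposition 2.25 for $p(P,Q)\in P'\cap Q_1$, and Propositions 2.14 and 2.20 for the commuting-square case $\lambda=1$. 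You instead rederive them from scratch: $p(P,Q)=\sum_i\lambda_ie_Q\lambda_i^*$ via \Cref{bases-n-intermediate}; $p(P,Q)\in P'$ by conjugating the basis by unitaries of $P$ and using only the basis-independence (Lemma 2.18 of \cite{BDLR2017}, which the paper also invokes); and, as the crux, the projection property via the element $d=\sum_i\lambda_iE_Q(\lambda_i^*)$, which you show lies in $N'\cap M=\C$ (using $p(P,Q)\in P'$, $N\subseteq P\cap Q$, and injectivity of $m\mapsto me_Q$ on $M$), whence $p(P,Q)^2=d\,p(P,Q)$, $d=\|p(P,Q)\|=\lambda$, and $e_Q\leq\frac{1}{\lambda}p(P,Q)$ from $p(P,Q)e_Q=\lambda e_Q$. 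Your part (2) is likewise a direct computation ($E_Q|_P=E_N|_P$ forces $d=1$, hence $\lambda=1$) rather than a citation. Each step checks out (nonnegativity of $d$, which you assert in passing, follows from $e_Qp(P,Q)e_Q=d\,e_Q\geq 0$). What your route buys is self-containedness and transparency: it makes explicit exactly where the hypothesis $N'\cap M=\C$ enters, namely in forcing $d$ to be scalar, a point the paper leaves buried inside the cited lemmas. What the paper's route buys is brevity, since the needed statements are already available in \cite{BDLR2017} in the required generality.
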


\begin{proof}
(1) From \cite[Lemma 3.2]{BDLR2017}, we know that
  $\frac{1}{\lambda}p(P,Q)\, \big(=\frac{1}{\lambda}\sum_i\lambda_i
  e_Q\lambda^*_i\big)$ is a projection and, by \cite[Lemma
    3.4]{BDLR2017}, $e_Q$ is a subprojection of
  $\frac{1}{\lambda}p(P,Q)$. Further, by \cite[Proposition
    2.25]{BDLR2017}, we know that $p(P,Q)\in P^{\prime}\cap Q_1$; so,
  it follows that $\big\{\lambda_i:i\in I\big\}\subseteq
  \big\{\frac{1}{\lambda}p(P,Q)\big\}^{\prime}\cap M$.  Also, we have
  $\sum_i \frac{1}{\sqrt{\lambda}} \lambda_i e_Q
  \frac{1}{\sqrt{\lambda}} \lambda_i^* = \frac{1}{\lambda}p(P,Q)$.
  Thus, in view of \Cref{proj2}, $\big\{\frac{1}{\sqrt{\lambda}}
  \lambda_i:i\in I\big\}$ is a Pimsner-Popa system for $M/Q$ with
  support $\frac{1}{\lambda}p(P,Q)$\smallskip

(2) Suppose that $(N,P,Q,M)$ is a commuting square. Then, by
 \cite[Propositions 2.14 $\&$ 2.20]{BDLR2017}, we know that $p(P,Q)$
 is a projection. Thus, $\lambda = \|p(P,Q)\| = 1$ and the conclusion
 follows from (1) and \Cref{orthogonalsystemtobasis}.
\end{proof}

\begin{proposition}
 Let $N\subset M$ be an irreducible subfactor of type $II_1$ with
 finite index and $\{\lambda_i\}$ be a Pimsner-Popa system for $M/N$
 with support lying in $N^{\prime}\cap M_1$. Then, $1\leq
 \sum_i\lambda_i{\lambda_i}^*\leq [M:N].$
\end{proposition}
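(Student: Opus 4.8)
The plan is to reduce the two operator inequalities to a single scalar estimate on $\tr(f)$, where $f:=\sum_i\lambda_ie_1\lambda_i^*\in N'\cap M_1$ is the support of the system, and then to extract that estimate from the Pimsner-Popa inequality applied to the basic construction $M\subset M_1$.

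First I would exploit irreducibility together with the $M$-bimodularity of the trace-preserving conditional expectation $E_M\colon M_1\to M$. Since $f\in N'\cap M_1$ and $N\subset M$, bimodularity gives $aE_M(f)=E_M(af)=E_M(fa)=E_M(f)a$ for all $a\in N$, so $E_M(f)\in N'\cap M=\C$; as $E_M$ preserves $\tr$, this scalar is $\tr(f)$, i.e. $E_M(f)=\tr(f)\,1$. Using $E_M(e_1)=[M:N]^{-1}\,1$ and bimodularity again, $E_M(\lambda_ie_1\lambda_i^*)=[M:N]^{-1}\lambda_i\lambda_i^*$, whence
\[
\sum_i\lambda_i\lambda_i^*=[M:N]\,E_M(f)=[M:N]\,\tr(f)\,1 .
\]
In particular $\sum_i\lambda_i\lambda_i^*$ is a scalar, and the two desired inequalities are equivalent to $[M:N]^{-1}\le \tr(f)\le 1$.

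The upper bound is immediate: $f$ is a projection, so $\tr(f)\le 1$ and hence $\sum_i\lambda_i\lambda_i^*\le[M:N]$. For the lower bound I would invoke the Pimsner-Popa inequality for the inclusion $M\subset M_1$, whose index is $[M_1:M]=[M:N]$: for every positive $y\in M_1$ one has $E_M(y)\ge[M:N]^{-1}y$. Applying this to $y=f$ and substituting $E_M(f)=\tr(f)\,1$ yields the operator inequality $\tr(f)\,1\ge[M:N]^{-1}f$. Sandwiching by $f$ (equivalently, testing against a unit vector in the range of $f$) gives $(\tr(f)-[M:N]^{-1})f\ge0$, so $\tr(f)\ge[M:N]^{-1}$ as soon as $f\neq0$; that is, $\sum_i\lambda_i\lambda_i^*\ge1$. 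Here one must assume the system is nontrivial, i.e. $f\neq0$, since the zero system would violate the lower bound.

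The one point requiring care — and the crux of the argument — is recognizing that irreducibility forces $E_M(f)$ to be a scalar. This is exactly what collapses the a priori $M$-valued quantity $\sum_i\lambda_i\lambda_i^*$ to the number $[M:N]\tr(f)$ and, at the same time, converts the Pimsner-Popa inequality into the sharp scalar bound $\tr(f)\ge[M:N]^{-1}$. Everything else is routine bookkeeping with the bimodularity of $E_M$ and the identity $E_M(e_1)=[M:N]^{-1}$. I do not expect to need the finer structure of $N'\cap M_1$ (such as the minimality of $e_1$ there), since the Pimsner-Popa inequality already encodes the extremal role played by $e_1$.
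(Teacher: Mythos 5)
Your proof is correct, and its skeleton coincides with the paper's: both arguments use irreducibility and the $N$-bimodularity relation $aE_M(f)=E_M(f)a$ for $a\in N$ to see that $E_M(f)\in N'\cap M=\C$, hence $\sum_i\lambda_i\lambda_i^*=[M:N]\,E_M(f)=[M:N]\tr(f)\,1$, and both extract the upper bound from the fact that $f$ is a projection, so $\tr(f)\le 1$. The difference lies in the lower bound. The paper simply cites \cite[Proposition 1.9]{PiPo}, which asserts that a nonzero projection in $N'\cap M_1$ has trace at least $[M:N]^{-1}$ when $N'\cap M=\C$. You instead derive this bound from the Pimsner--Popa inequality for the inclusion $M\subset M_1$, using $[M_1:M]=[M:N]$: from $\tr(f)\,1=E_M(f)\ge [M:N]^{-1}f$, compressing by $f$ (or testing on a unit vector in the range of $f$) gives $\tr(f)\ge [M:N]^{-1}$ whenever $f\neq 0$. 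In effect you have reproved, in a self-contained way, exactly the special case of the cited proposition that is needed; what this buys is independence from the finer structure theory of $N'\cap M_1$, at the cost of invoking the Pimsner--Popa inequality for the dual inclusion. A further point in your favor: you flag explicitly that $f\neq 0$ must be assumed (the zero family is formally a Pimsner--Popa system, and for it the asserted lower bound fails); this nontriviality hypothesis is implicit in the paper's proof and in its citation, so your version is, if anything, stated more carefully.
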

\begin{proof}
 Let $f$ denote the support of $\{\lambda_i\}$, i.e., $ f
 =\sum_i\lambda_ie_1{\lambda}^*_i$. Then, we obtain
 $\sum_i\lambda_i{\lambda}^*_i= [M:N]E_M(f).$ Since $N^{\prime}\cap
 M=\C$, we have $E_M(f)=\tr(f)\in [0,1].$ Therefore,
 $\sum_i\lambda_i{\lambda}^*_i\leq [M:N]$.

 On the other hand, since $f \in N'\cap M_1$ and $N'\cap M= \C$, by
 \cite[Proposition 1.9]{PiPo}, we have $\tr(f) \geq \tau$. Then, by
 irreducibility of $N \subset M$ again, we have $\tr(f) = E_M(f) = \tau
 \sum_i\lambda_i{\lambda}^*_i$. Hence, $ \sum_i\lambda_i{\lambda}^*_i
 \geq 1$.
 \end{proof}

We conclude this section with a small observation on a kind of local
behaviour of orthogonal systems. Recall, from \cite{Jon}, that for a
subfactor $N\subset M$ and a projection $f\in N^{\prime}\cap M$, the
index of $N$ at $f$ is given by $[M_f:N_f]= {[M:N]}_f$.  Also, a finite
index subfactor $N \subset M$ is said to be extremal, if $\tr_{N'}$
and $\tr_{M}$ agree on $N'\cap M$. Clearly, if $N\subset M$ is
irreducible, then it is extremal.\color{black}

\begin{proposition}
 Let $N\subset M$ be an irreducible subfactor of type $II_1$ with
 $[M:N]<\infty$ and $f\in N^{\prime}\cap M_1$ be a projection. Then,
 for any orthogonal system $\{\lambda_i\}$ with support $f$, we have
 $\sum_i\lambda_i{\lambda}^*_i=\sqrt{{[M_1:N]}_f}.$
\end{proposition}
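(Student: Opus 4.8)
The plan is to compute the two sides of the claimed identity separately and then match them. Writing $\tau := [M:N]^{-1}$ and letting $E_M := E^{M_1}_M$ denote the trace-preserving conditional expectation of $M_1$ onto $M$, I would establish
$$\sum_i \lambda_i\lambda_i^* = [M:N]\,\tr(f) \quad\text{and}\quad \sqrt{[M_1:N]_f} = [M:N]\,\tr(f),$$
after which the proposition is immediate. As in the preceding proposition, the orthogonality of the system plays no role in the value of $\sum_i \lambda_i\lambda_i^*$; only the support $f$ matters, so I would not need to invoke the relations $E_N(\lambda_i^*\lambda_j)=\delta_{ij}q_j$ at all.

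For the first identity I would start from $f = \sum_i \lambda_i e_1 \lambda_i^*$. Since each $\lambda_i \in M$ and $E_M(e_1)=\tau\,1$, the $M$-bimodularity of $E_M$ gives $E_M(\lambda_i e_1 \lambda_i^*) = \tau\,\lambda_i\lambda_i^*$; summing over $i$ yields $E_M(f)=\tau\sum_i\lambda_i\lambda_i^*$, i.e. $\sum_i\lambda_i\lambda_i^* = [M:N]\,E_M(f)$. Now $f\in N'\cap M_1$ and $E_M$ is $M$-bimodular, so $E_M(f)$ commutes with $N$; hence $E_M(f)\in N'\cap M = \C$ by irreducibility, and, as $E_M$ preserves the trace, $E_M(f)=\tr(f)\,1$. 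This gives $\sum_i\lambda_i\lambda_i^* = [M:N]\,\tr(f)$.

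For the second identity, note first that $f\in N'\cap M_1$ makes $Nf\,(=fNf)$ a subfactor of $fM_1 f$, so that $[M_1:N]_f = [fM_1 f : Nf]$ is well defined. I would then invoke the index-at-a-projection formula (\cite{Jon}): for an extremal finite-index subfactor $A\subset B$ and a projection $q\in A'\cap B$, one has $[qBq:Aq]=[B:A]\,\tr(q)^2$. Applying this to $A=N\subset B=M_1$ and $q=f$ gives $[fM_1 f : Nf] = [M_1:N]\,\tr(f)^2 = [M:N]^2\,\tr(f)^2$, whence $\sqrt{[M_1:N]_f}=[M:N]\,\tr(f)$. As sanity checks: $f=1$ recovers $[M:N]$ on both sides, while $f=e_1$ recovers $1$, since $e_1 M_1 e_1 = Ne_1$ and $\tr(e_1)=\tau$.

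The one genuine input — and the step I expect to be the main obstacle — is justifying that the extremal index formula applies here, i.e. that $N\subset M_1$ is extremal. This does \emph{not} follow from irreducibility of $N\subset M$ (indeed $N'\cap M_1$ is typically far from trivial, being $\cong \C[G]$ for a group subfactor), but it does follow from the fact that $N\subset M$ is extremal — which irreducibility guarantees — together with the standard fact that extremality is inherited by the whole Jones tower (see, e.g., \cite{PiPo, Popa}), so that $N\subset M_1$ is extremal. Granting this, the two computations combine to give $\sum_i\lambda_i\lambda_i^* = [M:N]\,\tr(f) = \sqrt{[M_1:N]_f}$, completing the proof.
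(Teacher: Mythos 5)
Your proof is correct and takes essentially the same route as the paper's: the paper likewise obtains $\sum_i\lambda_i\lambda_i^*=[M:N]\,\tr_{M_1}(f)$ from the support condition together with $N'\cap M=\C$ (orthogonality indeed playing no role in that step), and then invokes Jones's local index formula $[fM_1f:Nf]=[M_1:N]\big(\tr_{M_1}(f)\big)^2$ under extremality. If anything, you are more careful than the paper, which cites only the extremality of $N\subset M$ when applying the formula to the inclusion $N\subset M_1$; the inheritance of extremality along the Jones tower that you spell out is exactly the implicit standard fact being used there.
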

\begin{proof}
 Since $N\subset M$ is extremal, the following local index formula
 holds (see \cite{Jon}):
 \[
 [fM_1f:Nf]=[M_1:N]{\big(\tr_{M_1}(f)\big)}^2=\Big([M:N]\tr_{M_1}(f)\Big)^2.
 \]
 On the other hand, since $\{\lambda_i\}$ is an orthogonal system, we
 obtain $\sum_i \lambda_i\lambda^*_i=[M:N]\tr_{M_1}(f).$ This completes the
 proof.
\end{proof}

\section{Regular subfactor and two-sided basis}\label{regular-n-2-sided-basis}
Before we pursue our hunt for a two-sided basis in a
regular subfactor, as asserted in the Introduction, we first show that
every finite index subfactor with a two-sided basis is extremal,
which, most likely, is folklore.

\begin{proposition}\label{2sided-extremal}
Let $N \subset M$ be a type $II_1$ subfactor with finite index. If
there exists a two-sided basis for $M$ over $N$, then $N \subset M$ is
extremal.
\end{proposition}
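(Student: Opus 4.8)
The plan is to pin down the two traces $\tr_{N'}$ and $\tr_M$ on $N'\cap M$ and show they coincide, using the given two-sided basis together with the modular conjugation $J$ on $L^2(M)$ and the basic-construction identity $N' = JM_1J$. The point will be that the left- and right-basis hypotheses feed \emph{different} pieces of Watatani's index into the two trace computations, and two-sidedness is exactly what forces them to match.

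First I would record the two consequences of two-sidedness at the level of the Watatani index. Writing $\{\lambda_i\}$ for the two-sided basis, the right-basis property gives $\sum_i\lambda_ie_1\lambda_i^* = 1$ by \Cref{basis-equivalence}. The left-basis property $x = \sum_i E_N(x\lambda_i^*)\lambda_i$ is, after taking adjoints, precisely the assertion that $\{\lambda_i^*\}$ is a (right) Pimsner-Popa basis; hence by \Cref{subsystem} one gets $\sum_i\lambda_i^*\lambda_i = \mathrm{Index}_w(N\subset M) = [M:N]$. It is this second identity, \emph{available only because the basis is two-sided}, that will do the real work.

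Next comes the key computation. Fix a self-adjoint $a\in N'\cap M$ (these span, so it suffices). The crucial local identity is $JaJ\,e_1 = a^*e_1 = ae_1$, which I would verify by evaluating both sides on $m\Omega$: since $JaJ$ is right multiplication by $a^*$ and $a^*$ commutes with $E_N(m)\in N$, both sides send $m\Omega$ to $E_N(m)a^*\Omega$. Multiplying the resolution of the identity $1 = \sum_i\lambda_ie_1\lambda_i^*$ on the left by $JaJ\in M'$ and commuting $JaJ$ past each $\lambda_i\in M$ then yields the clean formula $JaJ = \sum_i\lambda_i\,ae_1\,\lambda_i^*$ in $M_1$. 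Feeding this into the trace, and using that the conjugate-linear $*$-isomorphism $x\mapsto JxJ$ carries the canonical trace of the $II_1$ factor $M_1$ to that of $N'$ (so that $\tr_{N'}(a) = \tr_{M_1}(JaJ)$ for self-adjoint $a$) together with the elementary relation $\tr_{M_1}(xe_1y) = \tau\,\tr_M(xy)$ and cyclicity of $\tr_M$, I obtain $\tr_{N'}(a) = \tau\sum_i\tr_M(\lambda_ia\lambda_i^*) = \tau\,\tr_M\!\big(a\sum_i\lambda_i^*\lambda_i\big) = \tau[M:N]\,\tr_M(a) = \tr_M(a)$, where $\tau = [M:N]^{-1}$ and the penultimate step invokes the identity $\sum_i\lambda_i^*\lambda_i = [M:N]$ from above. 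Since self-adjoint elements span $N'\cap M$, the traces $\tr_{N'}$ and $\tr_M$ agree on $N'\cap M$, i.e. $N\subset M$ is extremal.

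I expect the main obstacle to be establishing the local identity $JaJ\,e_1 = ae_1$ and the trace-transport $\tr_{N'}(a) = \tr_{M_1}(JaJ)$ with the correct conventions for $J$, for right multiplication, and for the normalization of the commutant trace; once these are nailed down, the remainder is bookkeeping driven entirely by $\sum_i\lambda_i^*\lambda_i = [M:N]$.
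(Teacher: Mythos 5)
Your proof is correct and takes essentially the same route as the paper's: both arguments extract $\sum_i \lambda_i^*\lambda_i = [M:N]$ from the left-basis half of the hypothesis (the paper by applying $E^{M_1}_M$ to $\sum_i \lambda_i^* e_1 \lambda_i = 1$, you via the Watatani index of the right basis $\{\lambda_i^*\}$ --- the same identity), then identify $\tr_{N'}$ on $N'\cap M$ with $x \mapsto [M:N]^{-1}\sum_i \tr_M(\lambda_i x \lambda_i^*)$ and close the computation by traciality of $\tr_M$. The only difference is one of packaging: the paper cites \cite[Lemma 2.23]{BDLR2017} for the averaging formula $E_{M'}(x) = [M:N]^{-1}\sum_i \lambda_i x \lambda_i^*$, whereas you rederive its trace consequence from scratch via the (correct) identities $JaJ\,e_1 = a^*e_1$, $JaJ = \sum_i \lambda_i a e_1 \lambda_i^*$ and the $J$-transport of the canonical traces between $M_1$ and $N'$, making the write-up self-contained but not a genuinely different argument.
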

\begin{proof}
Given any right basis $\{\lambda_i : 1 \leq i \leq n\}$ for $M/N$, it is
known (see, for instance, \cite[Lemma 2.23]{BDLR2017}) that the $\tr_{N'}$
preserving conditional expectation $E_{M'} : N' \rar M'$ is given by
\[
E_{M'}(x) = [M:N]^{-1}\sum_i \lambda_i x \lambda_i^*,\  x \in N'.
\]
Thus, if $x \in N'\cap M$, then
\(
\tr_{N'}(x) = E_{M'\cap M}(x) = [M:N]^{-1}\sum_i \lambda_i x \lambda_i^*.
\)

Now, let $\{\lambda_i : 1 \leq i \leq n\}$ be any two-sided basis for
$M/N$. Then, we have $\sum_i \lambda_i^* e_1 \lambda_i = 1 = \sum_i
\lambda_i e_1 \lambda_i^*$ so that $\sum_i\lambda_i^* \lambda_i =
       [M:N]\, 1_M$ (after applying $E^{M_1}_M$ on both sides of first
       equality). Thus, for any $x \in N'\cap M$, we have
\begin{eqnarray*}
  \tr_{M}(x) & = & [M:N]^{-1} \tr_{M}\big(x \sum_i \lambda_i^* \lambda_i\big) \\
  & = &
  [M:N]^{-1} \tr_{M}\big( \sum_i \lambda_i x \lambda_i^*\big)\\ &  = & \tr_M\big( \tr_{N'}(x)\, 1_M \big)\\
  &= & \tr_{N'}(x).
       \end{eqnarray*}
       Hence, $N \subset M$ is extremal.  \end{proof}

 As the header suggests, this section is devoted to
proving the existence of two-sided basis for a finite index regular
subfactor. Keeping this in mind, from now onward, throughout this
section, $N\subset M$ will denote a finite index subfactor of type
$II_1$, which is not necessarily irreducible, and $\mathcal{R}$ will
denote the intermediate von Neumann subalgebra generated by $N $ and
$N '\cap M$, i.e., $\mathcal{R} = N\vee (N^{\prime}\cap M$). We first
present some preparatory results that we require to deduce the main
theorem.

\begin{lemma}\label{samenormalizer}
  With notations as in the preceding paragraph, we have
  \[
  \mathcal{N}_M(N)\subseteq \mathcal{N}_M\big(\mathcal{R}\big).
  \]
\end{lemma}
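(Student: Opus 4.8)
The plan is to show that conjugation by a normalizing unitary of $N$ preserves not only $N$ but also the relative commutant $N'\cap M$, and therefore preserves the von Neumann algebra $\mathcal{R}=N\vee(N'\cap M)$ that these two subalgebras generate. The whole argument is purely algebraic and does not require finiteness of the index.

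First I would fix $u\in\mathcal{N}_M(N)$ and consider the $*$-automorphism $\alpha=\mathrm{Ad}(u)$ of $M$ given by $\alpha(x)=uxu^*$; by hypothesis $\alpha(N)=N$. The crux is to verify that $\alpha(N'\cap M)=N'\cap M$. For this, take $x\in N'\cap M$ and an arbitrary $n\in N$. Since $uNu^*=N$ we equally have $u^*Nu=N$, so $u^*nu\in N$, whence $x$ commutes with $u^*nu$. The short manipulation $uxu^*n=ux(u^*nu)u^*=u(u^*nu)xu^*=n\,uxu^*$ then shows that $\alpha(x)$ commutes with every $n\in N$, i.e. $\alpha(x)\in N'\cap M$. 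This yields $\alpha(N'\cap M)\subseteq N'\cap M$; applying the identical argument to $u^*$, which also lies in $\mathcal{N}_M(N)$, gives the reverse inclusion, so that in fact $u(N'\cap M)u^*=N'\cap M$.

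Finally, since the $*$-automorphism $\alpha$ carries each of the two generating subalgebras $N$ and $N'\cap M$ onto itself, it carries the von Neumann algebra $\mathcal{R}$ that they generate onto itself; hence $u\mathcal{R}u^*=\mathcal{R}$, that is, $u\in\mathcal{N}_M(\mathcal{R})$. As $u$ was an arbitrary element of $\mathcal{N}_M(N)$, the asserted inclusion follows. I do not anticipate a genuine obstacle: the only points needing a little care are (i) invoking both $u$ and $u^*$ so as to obtain the equality $\alpha(N'\cap M)=N'\cap M$ rather than a mere inclusion, and (ii) the routine fact that a $*$-automorphism preserving a generating family of a von Neumann algebra preserves the algebra it generates.
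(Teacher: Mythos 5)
Your proposal is correct and follows essentially the same route as the paper's proof: the identical commutation trick $uxu^*n = u x (u^*nu) u^* = n\,uxu^*$ to show $\mathrm{Ad}(u)$ preserves $N'\cap M$, followed by the observation that an automorphism fixing both generating subalgebras fixes $\mathcal{R} = N \vee (N'\cap M)$. If anything, you are slightly more careful than the paper, which asserts the equality $u(N'\cap M)u^* = N'\cap M$ directly, whereas you note that the computation gives only an inclusion and obtain the equality by running the same argument for $u^*$.
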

\begin{proof}
Let $u\in \mathcal{N}_M(N)$. Then, $uNu^*=N$, and for $x\in
N^{\prime}\cap M$, we have
\[
(uxu^*)n=uxu^*nuu^*=uu^*nuxu^*=n(uxu^*)\ \text{for all } n \in N,
\] i.e., $u(N^{\prime}\cap
M)u^*=N^{\prime}\cap M$. So, $u(nx)u^* = (unu^*)(uxu^*)\in
N\vee(N'\cap M)$ for all $n \in N$ and $x \in N'\cap M$. Thus, we readily deduce that $u\mathcal{R}u^* = \mathcal{R}$.
\end{proof}

The following crucial ingredient is an adaptation of \cite[Lemma 5.7.3]{JS}.
\begin{proposition}\label{finitedimensionalbasis}
Let $\tr$ denote the restriction of $\tr_M$ on $N^{\prime}\cap M$. Then,
$N^{\prime}\cap M$ has a two-sided Pimsner-Popa basis over $\C$ with
respect to $\tr$. 
\end{proposition}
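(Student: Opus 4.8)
The plan is to exploit the finite-dimensional structure of $A := N'\cap M$ directly, rather than invoking any deeper subfactor machinery. Since $[M:N] < \infty$, the relative commutant $A$ is a finite-dimensional $C^*$-algebra, so I would begin by fixing a Wedderburn decomposition $A \cong \bigoplus_{i=1}^{k} M_{n_i}(\C)$ together with a full system of matrix units $\{e^{(i)}_{pq} : 1 \le i \le k,\ 1 \le p, q \le n_i\}$. The first key observation is that over the base $\C$ the unique $\tr$-preserving conditional expectation $E_{\C} : A \to \C$ is simply the tracial state $\tr$ itself (since $x \mapsto \tr(x)\,1$ is a $\tr$-preserving conditional expectation onto $\C\,1$). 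Consequently, the defining expansion properties of a left and of a right Pimsner-Popa basis $\{\lambda_j\}$ over $\C$ collapse to the scalar identities $x = \sum_j \tr(x\lambda_j^*)\,\lambda_j$ and $x = \sum_j \lambda_j\,\tr(\lambda_j^* x)$, respectively, for all $x \in A$.

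Next I would pin down the trace on matrix units. Because $\tr$ is a faithful tracial state, on the $i$-th block it is a positive multiple of the non-normalized matrix trace, so that $\tr(e^{(i)}_{pq}) = \delta_{pq}\, t_i$ for scalars $t_i > 0$ (the trace of a minimal projection in the $i$-th block), with $\sum_i n_i t_i = 1$. Guided by the path-algebra computation of \Cref{examples} specialized to the inclusion $A_0 = \C \subset A_1 = A$ --- which is exactly the adaptation of \cite[Lemma 5.7.3]{JS} alluded to in the statement --- I would propose the rescaled matrix units
\[
\lambda^{(i)}_{pq} := t_i^{-1/2}\, e^{(i)}_{pq}, \qquad 1 \le i \le k,\ 1 \le p, q \le n_i,
\]
as the candidate two-sided basis.

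The verification is then a short, purely algebraic computation. By linearity it suffices to test the two expansion formulas on an arbitrary matrix unit $x = e^{(i)}_{ab}$. Using $(e^{(j)}_{pq})^* = e^{(j)}_{qp}$, the multiplication rule $e^{(j)}_{qp} e^{(i)}_{ab} = \delta_{ij}\delta_{pa}\, e^{(i)}_{qb}$, and the trace formula above, the right-hand expansion $\sum_{j,p,q} \lambda^{(j)}_{pq}\,\tr\big((\lambda^{(j)}_{pq})^* e^{(i)}_{ab}\big)$ collapses to $t_i^{-1}\, t_i\, e^{(i)}_{ab} = e^{(i)}_{ab}$, and symmetrically the left-hand expansion $\sum_{j,p,q} \tr\big(e^{(i)}_{ab}(\lambda^{(j)}_{pq})^*\big)\,\lambda^{(j)}_{pq}$ reduces to the same thing. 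Hence $\{\lambda^{(i)}_{pq}\}$ is simultaneously a left and a right Pimsner-Popa basis for $A$ over $\C$.

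The step I would flag as the crux --- conceptually subtle though computationally trivial --- is that a single normalization constant $t_i^{-1/2}$ serves for both expansions at once. This works precisely because $\tr$ is tracial, so that $\tr(e^{(i)}_{qp} e^{(i)}_{ab})$ and $\tr(e^{(i)}_{ab} e^{(i)}_{qp})$ return the same scalar; for a merely faithful but non-tracial weight the left and right conditions would force incompatible normalizations and two-sidedness would break down. This is consistent with the broader picture behind \Cref{2sided-extremal}, where the existence of a two-sided basis is tightly governed by symmetry of the relevant traces.
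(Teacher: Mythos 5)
Your proof is correct and is essentially the paper's own argument: the path-algebra matrix units $e_{\kappa,\beta}$ that the paper uses for the inclusion $\C \subseteq N'\cap M$ are exactly your Wedderburn matrix units $e^{(i)}_{pq}$, and both proofs pass to the rescaled elements $t_i^{-1/2}e^{(i)}_{pq}$, where $t_i$ is the trace of a minimal projection in the $i$-th block. The only cosmetic difference is that you verify the left and right expansion formulas directly (correctly flagging traciality as the reason one normalization serves both), whereas the paper verifies only the left one and then observes that the basis is a self-adjoint set, hence automatically two-sided.
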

\begin{proof}
Let $\overrightarrow{n}=[n_1,n_2,\cdots,n_k]$ denote the dimension
vector of $N^{\prime}\cap M$ and $\bar{t}$ denote the trace vector of
$\tr$.  Consider the path algebra model $B_{-1} \subseteq B_0 \subseteq
B_1$ for the inclusion $\C \subseteq N'\cap M$ as recalled in
\Cref{examples}. Since $(\C \subseteq N'\cap M) \cong (B_0 \subseteq B_1)$,
it is enough to show that $B_0 \subseteq B_1$ admits a two-sided basis
with respect to the tracial state (on $B_1$) determined by the trace
vector $\bar{t}$. Let
\[
J:=\{(\kappa,\beta):\kappa,\beta\in
\Omega_{1]}~~~\text{such that}~~r(\kappa)=r(\beta)\}.
  \]
  Then, by \cite[Proposition 5.4.1(iv)]{JS} (or see \Cref{examples}),
    $\{e_{\kappa,\beta} : (\kappa,\beta) \in J\}$ is a system of
    matrix units for $B_1$. So, by \cite[Proposition 5.4.3 (iii)]{JS},
    we easily deduce that
  \[
  E_{B_0}\big(e_{\kappa,\beta}(e_{\kappa^{\prime},\beta^{\prime}})^*\big) = \delta_{(\kappa,\beta),(\kappa^{\prime},\beta^{\prime})}\bar{t}_{r(\kappa)}\text{ 
    for all } (\kappa,\beta),(\kappa^{\prime},\beta^{\prime}) \in J.
  \]
Then, defining
\[
\lambda_{\kappa,\beta}=\frac{1}{\sqrt{\, \bar{t}_{r(\kappa)}}}e_{\kappa,\beta}\text{ 
  for }(\kappa,\beta) \in J,
\]
we obtain
\[
\sum_{(\kappa^{\prime},\beta^{\prime})\in
  I}E_{B_0}\big(e_{\kappa,\beta}(\lambda_{\kappa^{\prime},\beta^{\prime}})^*\big)\lambda_{\kappa^{\prime},\beta^{\prime}}=e_{\kappa,\beta}\ \text{
  for all } (\kappa,\beta) \in J.
\]
In particular, since $\{e_{\kappa,\beta} : (\kappa,\beta) \in J\}$ is a system of
matrix units for $B_1$, we  have 
\[
\sum_{(\kappa^{\prime},\beta^{\prime})\in
  J}E_{B_0}\big(x(\lambda_{\kappa^{\prime},\beta^{\prime}})^*\big)\lambda_{\kappa^{\prime},\beta^{\prime}}=x\ \text{
  for all } x \in B_1,
\]
that is, $\mathcal{B}:=\{
\lambda_{\kappa^{\prime},\beta^{\prime}}:(\kappa^{\prime},\beta^{\prime})\in
J \}$ is a left Pimsner-Popa basis for $(N^{\prime}\cap M)/\C$. Hence, being a
self-adjoint set, $\mathcal{B}$ is in fact a two-sided Pimsner-Popa
basis for $B_1$ over $\C$.
\end{proof}

\begin{lemma}\label{finitedimension1}
$\mathcal{R}$ has a two-sided basis over $N$  contained in $
  N^{\prime}\cap M.$
\end{lemma}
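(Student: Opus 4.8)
The plan is to show that the very set $\mathcal{B} = \{\lambda_i : i \in I\}$ produced by \Cref{finitedimensionalbasis} --- a two-sided Pimsner-Popa basis for $N'\cap M$ over $\C$ with respect to $\tr = {\tr_M}_{|_{N'\cap M}}$ and visibly contained in $N'\cap M$ --- is \emph{itself} a two-sided basis for $\mathcal{R}$ over $N$. Throughout, let $E$ denote the restriction to $\mathcal{R}$ of the unique $\tr_M$-preserving conditional expectation $E_N^M : M \to N$; since $\mathcal{R}\subseteq M$ and $E_N^M(\mathcal{R})\subseteq N$, this $E$ is exactly the trace-preserving conditional expectation from $\mathcal{R}$ onto $N$, and it is $N$-bimodular.

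Two ingredients are needed. The first is the behaviour of $E$ on the relative commutant: for $b\in N'\cap M$ and any $n\in N$, bimodularity gives $n E(b) = E(nb) = E(bn) = E(b)n$, so $E(b)\in N\cap N'$; as $N$ is a factor this forces $E(b)\in\C 1$, and trace-preservation then yields $E(b)=\tr(b)\,1$. Thus $E$ restricted to $N'\cap M$ is precisely the expectation $E_{\C}^{N'\cap M}=\tr(\,\cdot\,)1$ for which $\mathcal{B}$ is a two-sided basis. The second ingredient is that every $x\in\mathcal{R}$ can be written as $x=\sum_i a_i\lambda_i$ with $a_i\in N$: since the finite set $\mathcal{B}$ spans $N'\cap M$ linearly and its elements commute with $N$, the linear span $N\cdot(N'\cap M)=\sum_i N\lambda_i$ is already a $*$-subalgebra containing both $N$ and $N'\cap M$; because $N'\cap M$ is finite dimensional this module is weakly closed, whence it equals $\mathcal{R}=N\vee(N'\cap M)$.

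With these in hand the verification is a direct computation. Writing $x=\sum_i a_i\lambda_i$ and using $N$-bimodularity of $E$ together with $E(\lambda_i\lambda_j^*)=\tr(\lambda_i\lambda_j^*)1$, we obtain
\[
\sum_j E(x\lambda_j^*)\lambda_j = \sum_i a_i \sum_j \tr(\lambda_i\lambda_j^*)\,\lambda_j = \sum_i a_i\lambda_i = x,
\]
where the middle equality is the left-basis property of $\mathcal{B}$ applied to $b=\lambda_i\in N'\cap M$. This is the left Pimsner-Popa basis identity for $\mathcal{R}/N$; the right-basis identity $\sum_j\lambda_j E(\lambda_j^* x)=x$ follows symmetrically, writing $x=\sum_i\lambda_i a_i$ (the $\lambda_i$ commute with $N$) and invoking the right-basis property of $\mathcal{B}$. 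Since $\mathcal{B}$ is a self-adjoint subset of $N'\cap M$, it is therefore a two-sided basis for $\mathcal{R}$ over $N$ contained in $N'\cap M$.

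The only genuinely delicate point is the second ingredient --- that $\mathcal{R}$ coincides with the algebraic module $\sum_i N\lambda_i$ rather than merely its weak closure; everything else is bookkeeping. I expect this to be where care is needed. If the closedness argument feels unsatisfying, the safe alternative is to check the two identities above first on the weakly dense $*$-subalgebra $N\cdot(N'\cap M)$ and then extend them to all of $\mathcal{R}$ by normality of both sides in $x$ together with Kaplansky density.
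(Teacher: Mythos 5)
Your proof is correct and follows the same skeleton as the paper's; the difference is that you prove directly what the paper outsources to citations. The paper's proof consists of exactly your two ingredients: it notes that $(\C,\, N'\cap M,\, N,\, M)$ is a commuting square (citing \cite[Lemma 4.6.2]{GHJ}; this is your first ingredient, which you derive from $N$-bimodularity of $E_N$ and factoriality of $N$), and that the quadruple $(\C,\, N'\cap M,\, N,\, \mathcal{R})$ is \emph{non-degenerate}, i.e.\ $\mathcal{R}$ is the weak closure of the algebraic product $N(N'\cap M)$; it then invokes \cite[Proposition 1.1.5]{popa2}, which says precisely that a basis for the small inclusion of a non-degenerate commuting square is a basis for the large one. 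Your computation transferring the basis of \Cref{finitedimensionalbasis} is in effect a proof of that cited proposition in this special case. The paper's route buys brevity and places the lemma in a general framework; yours buys transparency and fits the paper's stated aim of self-containedness.

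The one step you must not wave at is the weak closedness of $\sum_i N\lambda_i$: finite generation alone does \emph{not} imply weak closedness of a submodule. Already inside a $II_1$ factor $M$, if $a\geq 0$ is injective with unbounded inverse, then the singly generated module $aM$ contains $\chi_{[1/n,1]}(a)=a\big(a^{-1}\chi_{[1/n,1]}(a)\big)$ for every $n$ and hence is weakly dense, yet $1\notin aM$. What makes your claim true is the conjunction of commutation \emph{and} finite dimensionality: since $N$ and $N'\cap M$ commute, the map $n\otimes b\mapsto nb$ defines a normal $*$-homomorphism from $N\,\overline{\otimes}\,(N'\cap M)$ into $M$; as one leg is finite dimensional, this von Neumann tensor product coincides with the algebraic tensor product, so its image is exactly $N\cdot(N'\cap M)$, and images of normal $*$-homomorphisms between von Neumann algebras are weakly closed. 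Alternatively, your own fallback is sound and is the cleaner fix (it is essentially how the cited Proposition 1.1.5 of \cite{popa2} is proved): the map $x\mapsto\sum_j E(x\lambda_j^*)\lambda_j$ is normal and agrees with the identity on the weakly dense unital $*$-subalgebra $N\cdot(N'\cap M)$, hence on all of $\mathcal{R}$, which gives the basis identity and, a posteriori, the module decomposition $\mathcal{R}=\sum_i N\lambda_i$. Either way the lemma is proved; just replace the bare assertion by one of these arguments.
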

\begin{proof}
 First observe that $(\mathbb{C},N^{\prime}\cap M,N, M)$ is a
 commuting square (see, for instance, \cite[ Lemma 4.6.2]{GHJ}).  Now
 the quadruple $\big(\mathbb{C},N^{\prime}\cap M,N,N\vee
 (N^{\prime}\cap M)\big)$ is non-degenerate because $N\vee
 (N^{\prime}\cap M)$ is the SOT closure of the algebra
 $N(N^{\prime}\cap M)$ $(= (N^{\prime}\cap M)N)$ (see \cite[Proposition
   1.1.5]{popa2}).  Therefore, the conclusion follows once we apply
 Lemma \ref{finitedimensionalbasis} and \cite[Proposition
   1.1.5]{popa2} again.
\end{proof}

The following useful result is implicit in \cite{Kall}, and was also
observed in \cite[Lemma 4.2]{Cam}. For the sake of completeness, we
include a proof using Pimsner-Popa basis.

\begin{lemma}\label{kallman}
 Let $\theta$ be an automorphism of $\mathcal{R}$ such that its
 restriction to $N$ is an outer automorphism of
 $N$.  Then, $\theta$ is a free automorphism of $\mathcal{R}$.
\end{lemma}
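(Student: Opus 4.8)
The plan is to argue by contradiction, reducing the freeness of $\theta$ on the (non-factor) algebra $\mathcal{R} = N \vee (N'\cap M)$ to the outerness of $\theta|_N$ on the factor $N$. Recall that $\theta$ is \emph{free} if the only $a \in \mathcal{R}$ satisfying $ax = \theta(x)a$ for all $x \in \mathcal{R}$ is $a = 0$. So I would suppose, towards a contradiction, that some nonzero $a \in \mathcal{R}$ obeys $ax = \theta(x)a$ for every $x \in \mathcal{R}$; in particular $an = \theta(n)a$ for all $n \in N$. Since $\theta|_N$ is assumed to be an automorphism of $N$, we have $\theta(N) = N$, so $\theta(n) \in N$ throughout.

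Next I would bring in a basis. By \Cref{finitedimension1} there is a two-sided basis $\{b_1,\dots,b_m\} \subseteq N'\cap M$ for $\mathcal{R}$ over $N$. Setting $c_j := E^{\mathcal{R}}_N(ab_j^*) \in N$, the left-basis property gives $a = \sum_j c_j b_j$, so $a \neq 0$ forces $c_{j_0} \neq 0$ for some $j_0$. The crux is to observe that each coefficient intertwines $\mathrm{id}_N$ with $\theta|_N$: using that $b_j \in N'\cap M$ commutes with $N$ and that $E^{\mathcal{R}}_N$ is $N$-bimodular, for every $n \in N$
\[
c_j n = E^{\mathcal{R}}_N(ab_j^* n) = E^{\mathcal{R}}_N(a n b_j^*) = E^{\mathcal{R}}_N(\theta(n)a b_j^*) = \theta(n)c_j,
\]
the third equality being exactly the relation $an = \theta(n)a$.

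Then the obstruction has been transported into the factor $N$: the nonzero element $c := c_{j_0} \in N$ satisfies $cn = \theta(n)c$ for all $n$, and here the classical factor argument applies. Taking adjoints yields $c^*\theta(n) = nc^*$, so $c^*c$ commutes with $N$ and hence lies in $N'\cap N = \C$; thus $c^*c = \lambda 1$ with $\lambda > 0$, and $u := \lambda^{-1/2}c$ is an isometry in the finite factor $N$, hence a unitary. The relation $cn = \theta(n)c$ then reads $\theta(n) = unu^*$, i.e.\ $\theta|_N = \mathrm{Ad}(u)$ is inner, contradicting the hypothesis. Therefore $a = 0$ and $\theta$ is free.

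I expect the main conceptual obstacle to be precisely that $\mathcal{R}$ is not a factor, so the usual ``outer $\Rightarrow$ free'' proof for factors (which concludes $a^*a \in \C$ directly) does not apply verbatim; the centre of $\mathcal{R}$ is $Z(N'\cap M)$, which is generally nontrivial. The device that circumvents this is the basis \emph{sitting inside $N'\cap M$}, which lets one read off $N$-valued coefficients that individually intertwine the automorphism, thereby relocating the whole problem to the factor $N$. Supplying such a basis is exactly the content of \Cref{finitedimension1}, so that is the step I would lean on most.
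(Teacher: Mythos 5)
Your proof is correct and follows essentially the same route as the paper's: both argue by contradiction, expand the intertwining element against the two-sided basis of $\mathcal{R}$ over $N$ contained in $N'\cap M$ (\Cref{finitedimension1}), and use the fact that this basis commutes with $N$ to produce a nonzero element of $N$ intertwining $\mathrm{id}_N$ with $\theta|_N$. The only difference is cosmetic: where the paper finishes by citing Kallman's theorem (for factors, outer implies free), you prove that final step inline via the standard argument that $c^*c \in N'\cap N = \C$ and that an isometry in a finite factor is unitary.
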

\begin{proof}
 Suppose $\theta$ is not a free automorphism of $\mathcal{R}$. Then,
 by definition, there exists a non-zero $r\in \mathcal{R}$ such that
 \begin{equation}\label{free}
 rx=\theta(x)r\ \text{for all } x \in \mathcal{R}.
  \end{equation}
By Lemma \ref{finitedimension1}, there exists a basis
$\{\lambda_1,\ldots,\lambda_n\}$ for $\mathcal{R}/N$ contained in
$N^{\prime}\cap M$. Since $\sum_{i=1}^k
\lambda_iE_N(\lambda^*_ir)=r\neq 0$, we must have $E_N(\lambda^*_jr)\neq 0$
for at least one $\lambda_j$. Thus,
 multiplying both sides of \Cref{free} by $\lambda^*_j$ on the left,  we obtain
\begin{equation}\label{free2}
 \lambda^*_jrx=\lambda^*_j\theta(x)r =\theta(x)\lambda^*_jr \ \text{
   for all }x\in N.
\end{equation}
Then, taking conditional expectation $E_N$ on both sideds of
\Cref{free2}, we get
$$
E_N(\lambda^*_jr)x=\theta(x)E_N(\lambda^*_jr)\ \text{ for all } x \in N.
$$ This shows that $\theta|_{N}$ is not free. But a free automorphism
of a factor is outer (\cite{Kall}, \cite[$\S A.4$]{JS}). Hence, we have a
contradiction as $\theta_{|_N}$ is given to be outer.
\end{proof}

\begin{proposition}\label{orthonormalsystem-of-Q}
Let $G$ denote the generalized Weyl group of $N \subset M$. Then, any
set of coset representatives $\{u_g : g =[u_g] \in G\}$ of $G$ in
$\mathcal{N}_M(N)$ forms a two-sided orthonormal system for
$M/\mathcal{R}$.
\end{proposition}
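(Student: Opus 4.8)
The plan is to reduce the statement to the vanishing of certain conditional expectations. Writing $E_{\mathcal{R}}$ for the unique $\tr_M$-preserving conditional expectation from $M$ onto $\mathcal{R}$, being a two-sided orthonormal system for $M/\mathcal{R}$ amounts to $E_{\mathcal{R}}(u_g^* u_h) = \delta_{g,h}\,1 = E_{\mathcal{R}}(u_g u_h^*)$ for all $g, h \in G$. The diagonal terms are trivial: each $u_g$ is unitary, so $u_g^* u_g = 1 = u_g u_g^*$ and $E_{\mathcal{R}}(1) = 1$. Thus only the off-diagonal vanishing needs attention, and both $u_g^* u_h$ and $u_g u_h^*$ lie in $\mathcal{N}_M(N)$ with classes $g^{-1}h$ and $gh^{-1}$ respectively; since $g \neq h$, both classes are non-trivial in $G$ (this holds even if $G$ is non-abelian). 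Hence it suffices to prove: \emph{if $w \in \mathcal{N}_M(N)$ has $[w] \neq e$ in $G$, then $E_{\mathcal{R}}(w) = 0$}.

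The heart of the argument is to turn $E_{\mathcal{R}}(w)$ into an intertwiner for an automorphism of $\mathcal{R}$. By \Cref{samenormalizer}, $w \in \mathcal{N}_M(\mathcal{R})$, so $\theta := \mathrm{Ad}(w)|_{\mathcal{R}}$, $\theta(x) = wxw^*$, is an automorphism of $\mathcal{R}$ with $wx = \theta(x)w$ for all $x \in \mathcal{R}$. Setting $r := E_{\mathcal{R}}(w)$ and using the $\mathcal{R}$-bimodularity of $E_{\mathcal{R}}$, for every $x \in \mathcal{R}$ I would compute
\[
rx = E_{\mathcal{R}}(w)\,x = E_{\mathcal{R}}(wx) = E_{\mathcal{R}}(\theta(x)w) = \theta(x)\,E_{\mathcal{R}}(w) = \theta(x)\,r,
\]
so that $r$ satisfies exactly the intertwining relation appearing in the definition of a non-free automorphism.

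To force $r = 0$, I would invoke freeness of $\theta$ via \Cref{kallman}. Since $w \in \mathcal{N}_M(N)$, $\mathrm{Ad}(w)$ restricts to an automorphism of $N$, and the remaining point is that $\theta|_N$ is \emph{outer}. If it were inner, there would be $v \in \mathcal{U}(N)$ with $wxw^* = vxv^*$ for all $x \in N$; then $v^*w$ commutes with $N$, so $v^*w \in \mathcal{U}(N^{\prime}\cap M)$ and $w = v(v^*w) \in \mathcal{U}(N)\mathcal{U}(N^{\prime}\cap M)$, contradicting $[w] \neq e$. Thus $\theta|_N$ is outer, and \Cref{kallman} yields that $\theta$ is a free automorphism of $\mathcal{R}$. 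Freeness applied to the relation $rx = \theta(x)r$ gives $r = E_{\mathcal{R}}(w) = 0$, which completes the argument.

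I expect the only genuinely delicate steps to be the bookkeeping that $g^{-1}h$ and $gh^{-1}$ are non-trivial (so that a single lemma simultaneously covers both the left and right statements) and the reduction of outerness of $\mathrm{Ad}(w)|_N$ to the condition $[w] \neq e$; everything else is the routine bimodule computation displayed above together with direct appeals to \Cref{samenormalizer} and \Cref{kallman}.
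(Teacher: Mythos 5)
Your proof is correct and follows essentially the same route as the paper: both reduce the claim to showing $E_{\mathcal{R}}(w)=0$ whenever $w\in\mathcal{N}_M(N)$ has non-trivial class in $G$, and both obtain this by combining \Cref{samenormalizer}, the freeness of $\mathrm{Ad}(w)|_{\mathcal{R}}$ from \Cref{kallman}, and the $\mathcal{R}$-bimodularity of $E_{\mathcal{R}}$ applied to $wx=\theta(x)w$. The only difference is that you spell out the outerness of $\mathrm{Ad}(w)|_N$ (via the factorization $w=v(v^*w)$), a step the paper's proof asserts without detail.
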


\begin{proof}
  Let $w\in \mathcal{N}_M(N)$. We first assert that
  \[
  E_{\mathcal{R} }(w)=0 \text{ if and only if } w \in
  \mathcal{N}_M(N)\setminus \mathcal{U}(N) \mathcal{U}(N'\cap M).
  \]
Necessity is obvious. Conversely, suppose $w \notin
\mathcal{U}(N)\mathcal{U}(N'\cap M) $. Note that, by Lemma
\ref{samenormalizer}, $wxw^*\in \mathcal{R}$ for all $x\in
\mathcal{R}$. So, $\beta: \mathcal{R} \rightarrow \mathcal{R} $
defined by $\beta(x)=wxw^*$ is an automorphism of $ \mathcal{R}$,
which restricts to an outer automorphism on $N$ (since $w \notin
\mathcal{U}(N)\mathcal{U}(N'\cap M) $). Thus, by Proposition
\ref{kallman}, $\beta$ is a free automorphism of $\mathcal{R}$. Then,
applying $E_{ \mathcal{R}}$ on both sides of the equation
$wx=\beta(x)w$, we obtain $E_{\mathcal{R} }(w)x=\beta(x)E_N(w)$ for
all $x \in \mathcal{R} $. Since $\beta$ is free, we must have
$E_{\mathcal{R} }(w)=0$. This proves the assertion.

Now, fix a set of coset
representatives $\{u_g : g =[u_g] \in G\}$ of $G$ in
$\mathcal{N}_M(N)$. Then, by above assertion, we have
  \begin{equation}\label{R-orthogonality}
  E_{\mathcal{R}
  }(u_gu^*_h)=0=E_{ \mathcal{R} }(u_g^*u_h) ~~\text{if and only if}~~~ g\neq h.
  \end{equation}
  Hence, $\{u_g : g \in G\}$ forms a two-sided orthonormal system for $M$
  over $ \mathcal{R} $.
\end{proof}

\begin{proposition}\label{p=ep}
Let $\mathcal{P}:={\mathcal{N}_M(N)}''$ and $\{u_g : g \in G\}$ be an
orthonormal system for $M/\mathcal{R}$ as in
\Cref{orthonormalsystem-of-Q}. If $p$ denotes the support of $\{u_g :
g \in G\}$, then $p=e_{\mathcal{P}}$.

In particular, if $N \subset M$ is regular, then $\{u_g:g\in G\}$
forms a two-sided orthonormal basis for $M$ over $\mathcal{R}$.
\end{proposition}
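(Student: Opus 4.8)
The plan is to identify the range of the support projection $p=\sum_{g}u_g e_{\mathcal{R}}u_g^*$ inside $L^2(M)$ and show that it coincides with $L^2(\mathcal{P})=\overline{\mathcal{P}\Omega}$, the range of the Jones projection $e_{\mathcal{P}}$. Since $\{u_g\}$ is a Pimsner-Popa system for $M/\mathcal{R}$ by \Cref{orthonormalsystem-of-Q}, $p$ is a projection by \Cref{proj1}; and $e_{\mathcal{P}}$ is the orthogonal projection onto $L^2(\mathcal{P})$. Hence equality of their ranges forces $p=e_{\mathcal{P}}$.

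First I would record that $\mathcal{R}\subseteq\mathcal{P}\subseteq M$ (both $N$ and $N'\cap M$ have their unitaries inside $\mathcal{N}_M(N)$) and that each $u_g\in\mathcal{N}_M(N)\subseteq\mathcal{N}_M(\mathcal{R})$ by \Cref{samenormalizer}, so $u_g\mathcal{R}=\mathcal{R}u_g$. Consequently the range of the summand $u_g e_{\mathcal{R}}u_g^*$ is $u_g\overline{\mathcal{R}\Omega}=\overline{u_g\mathcal{R}\Omega}=\overline{\mathcal{R}u_g\Omega}$. Because $\{u_g\}$ is an orthonormal system for $M/\mathcal{R}$, these subspaces are mutually orthogonal: for $g\neq h$ one has $u_gu_h^*\in\mathcal{N}_M(N)\setminus\mathcal{U}(N)\mathcal{U}(N'\cap M)$, whence $E_{\mathcal{R}}(u_gu_h^*)=0$ (the assertion established in \Cref{orthonormalsystem-of-Q}), and therefore $\langle r_1u_g\Omega,r_2u_h\Omega\rangle=\tr(r_2^*r_1u_gu_h^*)=0$ for all $r_1,r_2\in\mathcal{R}$. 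Thus the range of $p$ is the orthogonal sum $\bigoplus_{g\in G}\overline{\mathcal{R}u_g\Omega}=\overline{\mathcal{A}\Omega}$, where $\mathcal{A}:=\mathrm{span}\{\mathcal{R}u_g:g\in G\}$.

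The heart of the argument is to show that $\mathcal{A}$ is an SOT-dense unital $*$-subalgebra of $\mathcal{P}$. It contains $1\in\mathcal{R}$ and is $*$-closed since $(\mathcal{R}u_g)^*=\mathcal{R}u_g^*$ with $u_g^*\in\mathcal{R}u_{g^{-1}}$. It is multiplicatively closed because the coset relation $[u_gu_h]=gh=[u_{gh}]$ gives $u_gu_h\in\mathcal{U}(N)\mathcal{U}(N'\cap M)\,u_{gh}\subseteq\mathcal{R}u_{gh}$, so that $(\mathcal{R}u_g)(\mathcal{R}u_h)=\mathcal{R}(u_g\mathcal{R})u_h=\mathcal{R}(\mathcal{R}u_g)u_h=\mathcal{R}u_gu_h\subseteq\mathcal{R}u_{gh}$. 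Moreover every $w\in\mathcal{N}_M(N)$ satisfies $wu_{[w]}^*\in\mathcal{U}(N)\mathcal{U}(N'\cap M)\subseteq\mathcal{R}$, i.e. $w\in\mathcal{R}u_{[w]}\subseteq\mathcal{A}$, so $\mathcal{A}\supseteq\mathcal{N}_M(N)$; and plainly $\mathcal{A}\subseteq\mathcal{P}$. Taking bicommutants, $\mathcal{A}''=\mathcal{N}_M(N)''=\mathcal{P}$, so $\mathcal{A}$ is SOT-dense in $\mathcal{P}$ and (via Kaplansky density) $\overline{\mathcal{A}\Omega}=\overline{\mathcal{P}\Omega}=L^2(\mathcal{P})$. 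This yields $\mathrm{range}(p)=L^2(\mathcal{P})=\mathrm{range}(e_{\mathcal{P}})$ and hence $p=e_{\mathcal{P}}$. I expect the main obstacle to be precisely this step, namely verifying that $\mathrm{span}\{\mathcal{R}u_g\}$ is multiplicatively closed and generates $\mathcal{P}$, which is exactly where the group structure of $G$ and the inclusion $\mathcal{U}(N)\mathcal{U}(N'\cap M)\subseteq\mathcal{U}(\mathcal{R})$ are genuinely used.

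For the final assertion, if $N\subset M$ is regular then $\mathcal{P}=\mathcal{N}_M(N)''=M$, so $e_{\mathcal{P}}=e_M=1$ and the support of $\{u_g\}$ is $1$; by \Cref{basis-equivalence} this makes $\{u_g\}$ a (right) Pimsner-Popa basis for $M/\mathcal{R}$. Since $\{u_g\}$ is already a two-sided orthonormal system by \Cref{orthonormalsystem-of-Q}, applying the same range computation to the coset representatives $\{u_{g^{-1}}^*:g\in G\}$ shows that the left support $\sum_g u_g^*e_{\mathcal{R}}u_g$ also equals $1$, so $\{u_g\}$ is a two-sided orthonormal basis for $M$ over $\mathcal{R}$.
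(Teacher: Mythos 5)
Your proof is correct, but it is organized around a different decomposition than the paper's. The paper splits $L^2(M)=L^2(\mathcal{P})\oplus L^2(\mathcal{P})^{\perp}$ and handles the two pieces separately: it first checks $p(u\Omega)=u\Omega$ for every $u\in\mathcal{N}_M(N)$ (writing $u=u_gv$ and using the orthogonality relations of \Cref{orthonormalsystem-of-Q}), extends to $L^2(\mathcal{P})$ by SOT-density of $\mathrm{span}\,\mathcal{N}_M(N)$ in $\mathcal{P}$, and then shows $p$ vanishes on $L^2(\mathcal{P})^{\perp}$ by a trace argument: for $y\perp\mathcal{P}$ one gets $\tr_M(ru_g^*y)=0$ for all $r\in\mathcal{R}$ (using that $\tr_M$ is SOT-continuous on bounded sets), whence $E_{\mathcal{R}}(u_g^*y)=0$. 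You instead identify the range of $p$ exactly in one step, as the orthogonal sum $\bigoplus_{g}\overline{\mathcal{R}u_g\Omega}$ (the orthogonality of these subspaces again coming from $E_{\mathcal{R}}(u_gu_h^*)=0$ for $g\neq h$), and then prove this equals $L^2(\mathcal{P})$ by showing $\mathcal{A}=\mathrm{span}\{\mathcal{R}u_g\}$ is a unital $*$-subalgebra with $\mathcal{N}_M(N)\subseteq\mathcal{A}\subseteq\mathcal{P}$, hence SOT-dense in $\mathcal{P}$. Your route dispenses with the orthogonal-complement computation entirely (it is absorbed into the exact range identification), gives the cleaner structural picture $L^2(\mathcal{P})=\bigoplus_g \overline{\mathcal{R}u_g\Omega}$ reminiscent of a crossed-product decomposition, and incidentally re-proves that $p$ is a projection without invoking \Cref{proj1}; the price is verifying the coset-algebra structure of $\mathcal{A}$, though even that is dispensable, since the sandwich $\mathrm{span}\,\mathcal{N}_M(N)\subseteq\mathcal{A}\subseteq\mathcal{P}$ together with the double commutant theorem applied to $\mathrm{span}\,\mathcal{N}_M(N)$ (already a unital $*$-algebra, as $\mathcal{N}_M(N)$ is a group) yields $\overline{\mathcal{A}\Omega}=L^2(\mathcal{P})$ directly, and the appeal to Kaplansky density is likewise unnecessary. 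The paper's version avoids any algebra-structure check at the cost of the separate complement step. Finally, you make explicit the left/right symmetry (passing to the representatives $\{u_{g^{-1}}^*\}$) needed for the two-sided conclusion in the regular case, which the paper leaves implicit; that is a worthwhile addition.
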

\begin{proof}
  We have $p=\sum_g u_ge_{\mathcal{R}}u^*_g\in
  \la \mathcal{M}, e_{\mathcal{R}}\ra \in B(L^2(\mathcal{M}))$ (see
  \Cref{support}). We first assert that $p|_{L^2(\mathcal{P})}=id$.

Let $A = \text{span}\big(\mathcal{N}_M(N)\big).$ Then, $\mathcal{P} =
A''$ and since $A$ is a unital $*$-subalgebra of $\mathcal{P}$, by
Double Commutant Theorem, we have $A'' = \overline{A}^{\mathrm{SOT}}$. Let $x
\in \mathcal{P}.$ Then, there exists a net $(x_i) \subset A$ such that
$x_i$ converges to $x$ in SOT. Thus, $x_i\Omega$ converges to
$x\Omega$ in $L^2(M)$. So, it suffices to show that $p(u\Omega) = u
\Omega$ for every $u \in \mathcal{N}_M(N)$ for then we will have
  \[
  p(x \Omega) = \lim_i p(x_i \Omega) =  \lim_i x_i \Omega  =  x\Omega.
  \]
  Let $u \in \mathcal{N}_M(N)$.  Then, $[u] = [u_g]$ for a unique $g
  \in G$. So, $u = u_g v$ for some $v \in
  \mathcal{U}(N)\mathcal{U}(N'\cap M)$. Thus,
\[
  p(u \Omega)  =  \sum_{t \in G} u_t e_{\mathcal{R}} u_t^* u \Omega
=  \sum_{t \in G} u_t E_\mathcal{R}( u_t^* u) \Omega
  =  \sum_{t \in G} u_t E_\mathcal{R}( u_t^* u_g)v \Omega
     =  u_g v \Omega
    =  u \Omega,
   \]
   where the second last equality holds because of Equation
   \ref{R-orthogonality}.

Now, it just remains to  show that $p_{|_{\big({L^2(P)}\big)^{\perp}}}=0.$ For this, it
suffices to show that, for all $y\in M$ satisfying $\tr_M(x^*y)=0$ for all
$x\in \mathcal{P}$, we must have $p(y\Omega)=0,$ that is, we just need to show
that $\sum_{g \in G} u_gE_{\mathcal{R}}(u^*_gy)\Omega=0$ for any such $y$. In
fact, we assert that $E_{\mathcal{R}}(u^*_gy)=0$ for all $g \in G$.

For $z\in \mathcal{U}(N)\mathcal{U}(N^{\prime}\cap M)$, $u_gz^*\in
\mathcal{P}$ so that $\tr_M(zu^*_gy)=0$ for all $g \in G$.  Further, since
$ \mathcal{R} =
\overline{\text{span}{\{\mathcal{U}(N)\mathcal{U}(N^{\prime}\cap
    M)\}}}^{\mathrm{SOT}}$ and  $\tr_M$ is SOT-continuous on bounded
  sets, it follows that $ \tr_M(ru^*_gy)=0$ for all $r\in \mathcal{R}$
and $g \in G$.  Hence, by the trace
preserving property of the conditional expectation, we deduce that
$E_{\mathcal{R}}(u^*_gy)=0$ for all $g \in G$. This completes the proof.
\end{proof}

 The following two elementary observations turn out to be catalytic in
 proving the existence of two-sided basis for an arbitrary regular
 subfactor of type $II_1$ with finite index.

 \begin{lemma}\label{normalizer-and-basis}
  Let $\mathcal{N}\subset \mathcal{P}\subset \mathcal{M}$ be an
  inclusion of finite von Neumann algebras  with a faithful tracial
  state $\tr$ on $\mathcal{M}$ and $\{\lambda_i:1\leq i\leq m\}$ be a
  basis for $\mathcal{P}/\mathcal{N}$.  Then, for any $u\in
  \mathcal{N}_{\mathcal{M}}(\mathcal{P})\cap
  \mathcal{N}_{\mathcal{M}}(\mathcal{N})$, $\{u\lambda_iu^*:1\leq
  i\leq m\}$ is also a basis for $\mathcal{P}/\mathcal{N}$.
 \end{lemma}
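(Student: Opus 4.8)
The plan is to reduce the whole statement to a single covariance identity for the conditional expectation, namely
\[
E_\mathcal{N}(u x u^*) = u\, E_\mathcal{N}(x)\, u^* \quad \text{for all } x \in \mathcal{M},
\]
which holds precisely because $u$ normalizes $\mathcal{N}$. Granting this identity, the lemma drops out by substituting $u^* y u$ into the basis expansion of $\{\lambda_i\}$ and conjugating back by $u$.

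To establish the covariance identity, I would introduce the map $\Phi(x) := u^* E_\mathcal{N}(u x u^*)\, u$ on $\mathcal{M}$ and show that it is itself the trace-preserving conditional expectation onto $\mathcal{N}$, so that $\Phi = E_\mathcal{N}$ by uniqueness. Since $u \in \mathcal{N}_{\mathcal{M}}(\mathcal{N})$ we have $u^*\mathcal{N}u = \mathcal{N}$, so $\Phi$ takes values in $\mathcal{N}$ and restricts to the identity on $\mathcal{N}$: for $n \in \mathcal{N}$, $unu^* \in \mathcal{N}$ gives $E_\mathcal{N}(unu^*) = unu^*$ and hence $\Phi(n) = n$. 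The same normalizing property makes $\Phi$ an $\mathcal{N}$-bimodule map (pull $uau^*, ubu^* \in \mathcal{N}$ out of $E_\mathcal{N}$ for $a,b \in \mathcal{N}$), while $\tr(uzu^*) = \tr(z)$ combined with $\tr \circ E_\mathcal{N} = \tr$ shows $\Phi$ is trace preserving; normality, unitality, and complete positivity are immediate since $\Phi$ is a composition of such maps. Uniqueness of the trace-preserving conditional expectation then forces $\Phi = E_\mathcal{N}$, which is exactly the covariance identity. This is the only genuine content of the argument, and it is the step where the hypothesis $u \in \mathcal{N}_{\mathcal{M}}(\mathcal{N})$ is essential; I expect the bimodularity check to be the point demanding the most care.

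With the covariance in hand I would finish as follows. First, $u \in \mathcal{N}_{\mathcal{M}}(\mathcal{P})$ guarantees $u\lambda_i u^* \in \mathcal{P}$, so the candidate family lies in $\mathcal{P}$. Now fix $y \in \mathcal{P}$; then $u^* y u \in \mathcal{P}$, and applying the basis expansion of $\{\lambda_i\}$ to $u^* y u$ yields
\[
u^* y u = \sum_{i=1}^m E_\mathcal{N}\big((u^* y u)\lambda_i^*\big)\, \lambda_i.
\]
Conjugating by $u$, inserting $u^* u = 1$ between the two factors of each summand, and applying the covariance identity to each term, I obtain
\[
y = \sum_{i=1}^m u\, E_\mathcal{N}\big(u^* y u \lambda_i^*\big)\, u^* \cdot u\lambda_i u^* = \sum_{i=1}^m E_\mathcal{N}\big(y\,(u\lambda_i u^*)^*\big)\,(u\lambda_i u^*),
\]
which is exactly the assertion that $\{u\lambda_i u^* : 1 \leq i \leq m\}$ is a (left) basis for $\mathcal{P}/\mathcal{N}$. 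The identical computation starting from the right expansion $u^* y u = \sum_i \lambda_i\, E_\mathcal{N}(\lambda_i^*\, u^* y u)$ settles the right-basis case, so the argument applies verbatim to one-sided and two-sided bases alike.
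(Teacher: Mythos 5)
Your proposal is correct and takes essentially the same route as the paper: the paper observes that $\theta = \mathrm{Ad}(u)$ is a trace-preserving automorphism of $\mathcal{P}$ leaving $\mathcal{N}$ invariant and leaves the rest as a ``routine verification,'' and your covariance identity $E_{\mathcal{N}}(uxu^*) = u\,E_{\mathcal{N}}(x)\,u^*$ (established via uniqueness of the trace-preserving conditional expectation) together with the substitution $y \mapsto u^*yu$ is exactly that verification, carried out correctly for both one-sided conventions.
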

 \begin{proof}
Note that the map $\theta: \mathcal{P}\rar \mathcal{P}$ given by
$\theta(x) = u x u^*$ is a $\tr_{\mathcal{M}}$ (and hence
$\tr_{\mathcal{P}}$) preserving automorphism of $\mathcal{P}$ that
keeps $\mathcal{N}$ invariant. Then, a routine verification shows that
$\{u\lambda_iu^*:1\leq i\leq m\}$ is also a basis for
$\mathcal{P}/\mathcal{N}$, which we leave  to the reader.
\end{proof}

 \begin{proposition}\label{two-sided-basis-normalizer}
  Let $\mathcal{N}\subset \mathcal{P}\subset \mathcal{M}$ be as in
  \Cref{normalizer-and-basis}. Suppose $\mathcal{P}/\mathcal{N}$ has a
  two-sided basis $\{\lambda_i: 1\leq i\leq m\}$ and
  $\mathcal{M}/\mathcal{P}$ has a two-sided basis $\{\mu_j:1\leq j\leq
  n\}$ contained in $ \mathcal{N}_{\mathcal{M}}(\mathcal{P})\cap
  \mathcal{N}_{\mathcal{M}}(\mathcal{N}).$ Then, $\{\mu_j\lambda_i:
  1\leq i\leq m,   1\leq j\leq n\}$ is a two-sided basis
  for $\mathcal{M}/\mathcal{N}$.
 \end{proposition}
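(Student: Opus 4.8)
The plan is to verify the two defining identities of a two-sided basis separately, as the right-basis property and the left-basis property require genuinely different arguments. Throughout I write $E^{\mathcal{M}}_{\mathcal{P}}\colon\mathcal{M}\to\mathcal{P}$, $E^{\mathcal{P}}_{\mathcal{N}}\colon\mathcal{P}\to\mathcal{N}$ and $E^{\mathcal{M}}_{\mathcal{N}}\colon\mathcal{M}\to\mathcal{N}$ for the trace-preserving conditional expectations, and I will use repeatedly the tower identity $E^{\mathcal{M}}_{\mathcal{N}}=E^{\mathcal{P}}_{\mathcal{N}}\circ E^{\mathcal{M}}_{\mathcal{P}}$ (which holds by uniqueness of trace-preserving expectations) together with the bimodule property of these maps over the respective subalgebras.

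The right-basis property is the easy half and needs no normalizer hypothesis. Given $x\in\mathcal{M}$, I would first expand it through the right basis $\{\mu_j\}$ for $\mathcal{M}/\mathcal{P}$ as $x=\sum_j\mu_j E^{\mathcal{M}}_{\mathcal{P}}(\mu_j^*x)$, and then expand each coefficient $E^{\mathcal{M}}_{\mathcal{P}}(\mu_j^*x)\in\mathcal{P}$ through the right basis $\{\lambda_i\}$ for $\mathcal{P}/\mathcal{N}$. Pulling $\lambda_i^*\in\mathcal{P}$ into $E^{\mathcal{M}}_{\mathcal{P}}$ from the left and then collapsing the two expectations via the tower identity turns the doubly indexed coefficient into $E^{\mathcal{M}}_{\mathcal{N}}((\mu_j\lambda_i)^*x)$, yielding $x=\sum_{i,j}\mu_j\lambda_i\,E^{\mathcal{M}}_{\mathcal{N}}((\mu_j\lambda_i)^*x)$, which is exactly the right-basis identity.

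The left-basis property is the real point, and this is where the normalizer hypothesis enters. The naive left expansion (pulling $\mu_j$ out on the right through the left basis of $\mathcal{M}/\mathcal{P}$ and then $\lambda_i$ out on the left through the left basis of $\mathcal{P}/\mathcal{N}$) produces the family $\{\lambda_i\mu_j\}$ rather than $\{\mu_j\lambda_i\}$, so the two orders do not match. To repair this I would, for each fixed $j$, replace $\{\lambda_i\}_i$ by its conjugate $\{\mu_j\lambda_i\mu_j^*\}_i$; since $\mu_j\in\mathcal{N}_{\mathcal{M}}(\mathcal{P})\cap\mathcal{N}_{\mathcal{M}}(\mathcal{N})$, \Cref{normalizer-and-basis} guarantees that this conjugated family is again a (left) basis for $\mathcal{P}/\mathcal{N}$. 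Expanding $x=\sum_j E^{\mathcal{M}}_{\mathcal{P}}(x\mu_j^*)\mu_j$ through $\{\mu_j\}$ and then expanding each $E^{\mathcal{M}}_{\mathcal{P}}(x\mu_j^*)\in\mathcal{P}$ through $\{\mu_j\lambda_i\mu_j^*\}_i$, the cancellation $\mu_j^*\mu_j=1$ simplifies the coefficients: one finds $E^{\mathcal{P}}_{\mathcal{N}}\big(E^{\mathcal{M}}_{\mathcal{P}}(x\mu_j^*)(\mu_j\lambda_i^*\mu_j^*)\big)=E^{\mathcal{M}}_{\mathcal{N}}(x\lambda_i^*\mu_j^*)=E^{\mathcal{M}}_{\mathcal{N}}(x(\mu_j\lambda_i)^*)$, while the attached element is $(\mu_j\lambda_i\mu_j^*)\mu_j=\mu_j\lambda_i$. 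This produces $x=\sum_{i,j}E^{\mathcal{M}}_{\mathcal{N}}(x(\mu_j\lambda_i)^*)\,\mu_j\lambda_i$, the desired left-basis identity.

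Combining the two halves shows that $\{\mu_j\lambda_i\}$ is simultaneously a left and a right Pimsner-Popa basis for $\mathcal{M}/\mathcal{N}$, hence two-sided. I expect the only real obstacle to be the bookkeeping of orders in the left-basis computation; the conceptual content is entirely the observation that conjugating the inner basis by the normalizing $\mu_j$ (legitimized by \Cref{normalizer-and-basis}) is precisely what lets the factors reassemble in the order $\mu_j\lambda_i$ rather than $\lambda_i\mu_j$.
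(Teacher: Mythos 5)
Your proof is correct and takes essentially the same route as the paper: the crucial idea in both is to conjugate the inner basis to $\{\mu_j\lambda_i\mu_j^*\}$ via \Cref{normalizer-and-basis} so that the factors reassemble in the order $\mu_j\lambda_i$ in the left-basis identity, while the right-basis half needs no normalizer hypothesis. The only difference is cosmetic: you verify the two basis identities by direct expansion through the conditional expectations and the tower property, whereas the paper runs the identical computation in terms of the Jones-projection criterion $\sum_k \nu_k e_1 \nu_k^* = 1$ of \Cref{basis-equivalence}.
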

\begin{proof}
 Let $ \lambda^{\prime}_{i,j}:=\mu_j\lambda_i\mu^*_j$, $1 \leq i \leq
 m, 1 \leq j \leq n$. Then, by Lemma \ref{normalizer-and-basis},
 $\{\lambda^{\prime}_{i,j}:1\leq i\leq m\}$ is a basis for
 $\mathcal{P}/\mathcal{N}$ for each $j$. Similarly,
 $\{(\lambda^{\prime}_{i,j})^*:1\leq i\leq m\}$ is also a basis for
 $\mathcal{P}/\mathcal{N}$. Since $\{\lambda_i\}$ is a basis for
 $\mathcal{P}/\mathcal{N}$, we have
 $\sum_i\lambda_ie_1\lambda^*_i=e_{\mathcal{P}}$ (see
 \Cref{bases-n-intermediate}). So, by
 \Cref{basis-equivalence}, we obtain
 $\sum_{i,j}\mu_j\lambda_ie_1\lambda^*_i\mu^*_j=\sum_{j}\mu_je_{\mathcal{P}}\mu^*_j=1.$
 Therefore, by \Cref{basis-equivalence} again, $\{\mu_j\lambda_i\}$ is
 a basis for $\mathcal{M}/\mathcal{N}$. On the other hand, we have
\[ \sum_{i,j} \lambda^*_i\mu^*_je_1\mu_j\lambda_i =
   \sum_{i,j}\mu^*_j(\lambda^{\prime}_{i,j})^{*}e_1\lambda^{\prime}_{i,j}\mu_j =\sum_j
   \mu^*_je_{\mathcal{P}}\mu_j  =
   1,
   \]
where the second last equality holds because
$\{\lambda^{\prime}_{i,j}:i\leq i\leq m\}$ is a basis for $
\mathcal{P}/\mathcal{N}$ and the last equality follows because
$\{\mu^*_j:1\leq j\leq n\}$ is a basis for
$\mathcal{M}/\mathcal{P}$. Thus, we conclude that
$\{(\mu_j\lambda_i)^*\}$ is also a basis for
$\mathcal{M}/\mathcal{N}$.  This completes the proof.
\end{proof}

We are now all set to deduce the main theorem of this article.
\begin{theorem}\label{two-sided}
{\em Let $N\subset M$ be a regular subfactor of type $II_1$ with
  finite index. Then, $M$ admits a two-sided Pimsner-Popa basis over $N$.}
\end{theorem}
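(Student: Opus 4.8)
The plan is to apply the patching technique of \Cref{two-sided-basis-normalizer} to the tower $N \subset \mathcal{R} \subset M$, taking $\mathcal{R} = N \vee (N'\cap M)$ as the intermediate subalgebra $\mathcal{P}$. This splits the problem into the two layers $\mathcal{R}/N$ and $M/\mathcal{R}$, for each of which a two-sided basis has already been arranged in the preparatory results.

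For the lower layer, I would invoke \Cref{finitedimension1} to obtain a two-sided basis $\{\lambda_i : 1 \le i \le m\}$ for $\mathcal{R}$ over $N$ that sits inside $N'\cap M$. For the upper layer, regularity enters decisively: since $N\subset M$ is regular, \Cref{p=ep} tells us that any set of coset representatives $\{u_g : g \in G\}$ of the generalized Weyl group $G$ in $\mathcal{N}_M(N)$ forms a two-sided orthonormal basis --- in particular, a two-sided basis --- for $M$ over $\mathcal{R}$. It is exactly here that regularity is needed: in the general case \Cref{p=ep} only gives support $e_{\mathcal{P}} \neq 1$, so $\{u_g\}$ is merely a system rather than a basis for $M/\mathcal{R}$.

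It then remains to check the normalization hypotheses of \Cref{two-sided-basis-normalizer}. By construction each $u_g \in \mathcal{N}_M(N)$, and \Cref{samenormalizer} gives $\mathcal{N}_M(N) \subseteq \mathcal{N}_M(\mathcal{R})$; hence $u_g \in \mathcal{N}_M(\mathcal{R}) \cap \mathcal{N}_M(N)$ for every $g \in G$. With $\{\lambda_i\}$ a two-sided basis for $\mathcal{R}/N$ and $\{u_g\}$ a two-sided basis for $M/\mathcal{R}$ contained in $\mathcal{N}_M(\mathcal{R}) \cap \mathcal{N}_M(N)$, \Cref{two-sided-basis-normalizer} yields at once that the products $\{u_g \lambda_i : g \in G,\ 1 \le i \le m\}$ form a two-sided Pimsner-Popa basis for $M$ over $N$. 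The genuine difficulty has been front-loaded into the preparatory work --- in particular, promoting the coset representatives from a system to a basis for $M/\mathcal{R}$ rests on the freeness argument of \Cref{kallman}, while the two-sided basis for $N'\cap M$ over $\C$ comes from the path-algebra computation of \Cref{finitedimensionalbasis}; the final assembly is then a direct application of the patching proposition.
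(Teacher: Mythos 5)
Your proposal is correct and follows essentially the same route as the paper's own proof: a two-sided basis for $\mathcal{R}/N$ from \Cref{finitedimension1}, a two-sided basis of normalizing unitaries for $M/\mathcal{R}$ from \Cref{p=ep} (where regularity is used), the containment $\mathcal{N}_M(N)\subseteq \mathcal{N}_M(\mathcal{R})$ from \Cref{samenormalizer}, and the final patching via \Cref{two-sided-basis-normalizer}. Your added remarks correctly identify where the real work lies (the freeness argument and the path-algebra computation), so nothing is missing.
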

\begin{proof}
  We observed in \Cref{finitedimension1} that $\mathcal{R}:=N\vee
  \big(N^{\prime}\cap M\big)$ admits a two-sided basis, say,
  $\{\lambda_i\}$, over $N$. Further, we readily deduce, from
  \Cref{p=ep}, that $M$ also admits a two-sided basis, say,
  $\{\mu_j\}$, over $\mathcal{R}$, which is contained in
  $\mathcal{N}_M(\mathcal{N})$. By \Cref{samenormalizer}, we know that
  $\mathcal{N}_M(\mathcal{N}) \subseteq
  \mathcal{N}_M(\mathcal{R})$. Hence, by
  \Cref{two-sided-basis-normalizer}, we conclude that $\{\mu_j \lambda_i
  \}$ is a two-sided Pimsner-Popa basis for $M$ over $N$.
\end{proof}
In view of \Cref{2sided-extremal}, we obtain the following:
\begin{corollary}\label{extremal}
Every regular subfactor of type $II_1$ with finite index is extremal.
  \end{corollary}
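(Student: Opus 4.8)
The plan is to combine the two results already secured in this section, since the corollary is an immediate logical consequence of them. First I would invoke \Cref{two-sided}: because $N \subset M$ is a regular finite-index subfactor of type $II_1$, it admits a two-sided Pimsner-Popa basis over $N$. Second, I would feed this conclusion directly into \Cref{2sided-extremal}, which asserts that any finite-index type $II_1$ subfactor possessing a two-sided basis is automatically extremal. Chaining these two implications yields that $N \subset M$ is extremal, which is exactly the assertion of the corollary.

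No new estimate or construction is required at this stage; the substantive content lives entirely upstream. In particular, the genuine mechanism behind extremality is encoded in the proof of \Cref{2sided-extremal}: given a two-sided basis $\{\lambda_i\}$ one uses the identity $\sum_i \lambda_i^* \lambda_i = [M:N]\,1_M$ (obtained by applying $E^{M_1}_M$ to $\sum_i \lambda_i^* e_1 \lambda_i = 1$) together with the formula $E_{M'}(x) = [M:N]^{-1}\sum_i \lambda_i x \lambda_i^*$ for the $\tr_{N'}$-preserving conditional expectation $E_{M'} : N' \rar M'$, in order to show that $\tr_M$ and $\tr_{N'}$ coincide on $N' \cap M$. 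This is precisely the definition of extremality recalled earlier.

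Consequently, the only real obstacle for this corollary is the existence of the two-sided basis itself, and that obstacle has already been cleared: \Cref{two-sided} supplies it by patching (via \Cref{two-sided-basis-normalizer}) the two-sided basis of $\mathcal{R}$ over $N$ from \Cref{finitedimension1} with the two-sided orthonormal basis of $M$ over $\mathcal{R}$ consisting of normalizing unitaries from \Cref{p=ep}. Granting \Cref{two-sided} and \Cref{2sided-extremal}, the proof reduces to a single line and there is nothing further to verify.
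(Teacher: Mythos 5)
Your proposal is correct and matches the paper exactly: the paper derives \Cref{extremal} as an immediate consequence of \Cref{two-sided} combined with \Cref{2sided-extremal}, with no additional argument. Your recap of the internal mechanism of \Cref{2sided-extremal} is accurate but not needed for this corollary.
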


It is well known to the experts that every regular subfactor has
integer index; for instance, there is a mention of this fact in
\cite[Page 150]{GHJ} (without a proof). As final application of some
of the results proved above, we deduce this fact along with a precise
expression for the index of such a subfactor. We will use Watatani's
notion of index of a conditional expectation to do so.

Recall, from
\cite{Wat}, that, given an inclusion $B \subset A$ of unital
$C^*$-algebras, a conditional expectation $E: A \rar B$ is said to
have finite index if there exists a right Pimsner-Popa basis
$\{\lambda_i: 1 \leq i \leq n\}$ for $A$ over $B$ via $E$ and the
Watatani index of $E$ is defined as
\[
\mathrm{Ind}(E) = \sum_{i=1}^n \lambda_i \lambda_i^*,
\]
which is independent of the basis $\{\lambda_i\}$ and is an element of
$\mathcal{Z}(A)$.
\begin{theorem}\label{integer-index}
  Every regular subfactor $N \subset M$ of type $II_1$ with  finite Jones index has integer
  valued index and the index is given by
  \[
[M:N] = |G|\, \mathrm{dim}_{\C}(N'\cap M),
\]
where $G$ denotes the generalized Weyl group of the inclusion $N \subset M$.
\end{theorem}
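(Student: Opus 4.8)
The plan is to assemble the index formula by using Watatani's multiplicativity of the index of conditional expectations across the tower $N \subset \mathcal{R} \subset M$, where as before $\mathcal{R} = N \vee (N'\cap M)$, and to evaluate each factor using the two-sided bases already constructed in this section. First I would recall that for a finite-index subfactor of type $II_1$ the Watatani index of the trace-preserving conditional expectation agrees with the Jones index (as noted in \Cref{subsystem}), so it suffices to compute $\mathrm{Ind}(E_N)$ using the basis produced in \Cref{two-sided}. That basis has the explicit product form $\{\mu_j \lambda_i\}$, where $\{\lambda_i\}$ is the two-sided basis for $\mathcal{R}/N$ from \Cref{finitedimension1} contained in $N'\cap M$, and $\{\mu_j\}$ is the two-sided orthonormal basis $\{u_g : g \in G\}$ for $M/\mathcal{R}$ from \Cref{p=ep}, consisting of normalizing unitaries indexed by the generalized Weyl group $G$.

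The key computation is then to plug this product basis into $\mathrm{Ind}(E_N) = \sum_{i,j} (\mu_j\lambda_i)(\mu_j\lambda_i)^* = \sum_{i,j}\mu_j \lambda_i \lambda_i^* \mu_j^*$. Since $\{u_g\}$ is an \emph{orthonormal} basis for $M/\mathcal{R}$, each $\mu_j = u_g$ is a unitary with $E_{\mathcal{R}}(u_g^* u_g) = 1$, and the inner sum $\sum_i \lambda_i \lambda_i^* = \mathrm{Ind}(E_N^{\mathcal{R}})$ equals the Watatani index of $\mathcal{R}$ over $N$. I would observe that $\sum_i \lambda_i\lambda_i^*$ is a scalar because $\mathcal{R}$ and $N$ are both factors in the relevant sense and the Watatani index lands in the center; in fact $\mathrm{Ind}(E_N^{\mathcal{R}}) = [\mathcal{R}:N]$, and since $\mathcal{R} = N \vee (N'\cap M)$ with $(\mathbb{C}, N'\cap M, N, \mathcal{R})$ a commuting square (as used in \Cref{finitedimension1}), one has $[\mathcal{R}:N] = \dim_{\C}(N'\cap M)$, matching the two-sided basis for $N'\cap M$ over $\C$ of \Cref{finitedimensionalbasis} which has exactly $\dim_{\C}(N'\cap M)$ elements. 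Summing over $g \in G$ the conjugates $u_g (\mathrm{scalar}) u_g^*$ of a scalar simply multiplies by $|G|$, giving $\mathrm{Ind}(E_N) = |G|\,\dim_{\C}(N'\cap M)$, which is a positive integer.

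The main obstacle I anticipate is bookkeeping the multiplicativity cleanly: I must confirm that the Watatani index factors as $\mathrm{Ind}(E_N^M) = \mathrm{Ind}(E_{\mathcal{R}}^M)\cdot\mathrm{Ind}(E_N^{\mathcal{R}})$ for the composite expectation $E_N^M = E_N^{\mathcal{R}} \circ E_{\mathcal{R}}^M$, and that the two-sided-basis product structure is exactly compatible with this factorization so that no cross terms survive. The orthonormality of $\{u_g\}$ over $\mathcal{R}$ is what forces $\mathrm{Ind}(E_{\mathcal{R}}^M) = |G|$, and I would verify this directly from $\sum_g u_g u_g^* = |G|\cdot 1$ since each $u_g$ is unitary, rather than invoking a general theorem. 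The integrality is then immediate since $|G|$ is a positive integer (finite by \Cref{weyl-gp}) and $\dim_{\C}(N'\cap M)$ is a positive integer; this simultaneously reproves the folklore fact that regular subfactors have integer index.
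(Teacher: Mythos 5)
Your overall skeleton (product two-sided basis $\{\mu_j\lambda_i\}$, pulling $|G|$ out of the unitaries $u_g$, reducing to the finite-dimensional inclusion $\C\subseteq N'\cap M$) is the same as the paper's, but the step where you evaluate the middle factor contains a genuine gap, and it is exactly the step the paper's proof is designed to handle. First, $\mathcal{R}=N\vee(N'\cap M)$ is \emph{not} a factor in general: its center equals $\mathcal{Z}(N'\cap M)\cong\C^k$ when $N'\cap M$ has $k$ simple summands. Consequently $\mathrm{Ind}(E_N^{\mathcal{R}})=\sum_i\lambda_i\lambda_i^*=\mathrm{Ind}(\tr)$ is a priori only a \emph{central element}, not a scalar: writing $[n_1,\ldots,n_k]$ for the dimension vector, $(s_1,\ldots,s_k)$ for the trace vector of $\tr={\tr_M}_{|_{N'\cap M}}$, and $\tilde{p}_t$ for the minimal central projections, one computes $\mathrm{Ind}(\tr)=\sum_{t=1}^{k}(n_t/s_t)\,\tilde{p}_t$. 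This equals the scalar $\dim_{\C}(N'\cap M)$ if and only if $s_t=n_t/\sum_l n_l^2$ for all $t$, i.e.\ iff the trace is proportional to the dimension weights --- and there is no a priori reason the restriction of $\tr_M$ has this form (this is closely tied to extremality, which in this paper is a \emph{corollary}, \Cref{extremal}, not a hypothesis). Your two justifications do not close this: ``$\mathcal{R}$ and $N$ are both factors in the relevant sense'' is false, and the fact that the basis of \Cref{finitedimensionalbasis} has exactly $\dim_{\C}(N'\cap M)$ elements proves nothing, since the cardinality of a Pimsner--Popa basis never computes an index. The same gap infects your multiplicativity step: $\sum_{i,j}\mu_j\lambda_i\lambda_i^*\mu_j^*=\sum_{g}u_g\,\mathrm{Ind}(\tr)\,u_g^*$, and you may pull $\mathrm{Ind}(\tr)$ out of the conjugation only after you already know it is scalar.

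The paper closes the gap with two moves you are missing. (i) It exploits two-sidedness to compute with the \emph{adjoint} basis $\{\lambda_i^*\mu_j^*\}$, so that
\[
[M:N]=\sum_{i,j}\lambda_i^*\mu_j^*\mu_j\lambda_i=|G|\sum_i\lambda_i^*\lambda_i=|G|\,\mathrm{Ind}(\tr),
\]
where the unitaries cancel via $\mu_j^*\mu_j=1$ \emph{before} any conjugation issue can arise, and basis-independence of the Watatani index gives $\sum_i\lambda_i^*\lambda_i=\sum_i\lambda_i\lambda_i^*$. Since $[M:N]$ is a scalar, this identity \emph{forces} $\mathrm{Ind}(\tr)$ to be scalar; it is a conclusion, not an assumption. (ii) Only then does it identify that scalar: by \cite[Corollary 2.4.3]{Wat}, with inclusion matrix $\Lambda=[n_1,\ldots,n_k]^t$ the trace vector satisfies $\bar{s}\,\Lambda\Lambda^t=\beta\bar{s}$ with $\mathrm{Ind}(\tr)=\beta$, which yields $\beta=n_t/s_t$ for every $t$, hence $1=\sum_t s_tn_t=\sum_t n_t^2/\beta$ and $\beta=\sum_t n_t^2=\dim_{\C}(N'\cap M)$. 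This eigenvalue computation is precisely the proof of your asserted identity ``$\mathrm{Ind}(E_N^{\mathcal{R}})=\dim_{\C}(N'\cap M)$'' and cannot be skipped; with it, your argument and the paper's coincide.
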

\begin{proof}
Consider the inclusion $\C \subseteq N'\cap M$. Let $\Lambda$ denote
its inclusion matrix. Let $\{\lambda_i\}\subset N'\cap M$ be a
two-sided basis for $N'\cap M$ over $\C$ with respect to $\tr$ as in
\Cref{finitedimensionalbasis}.  We observed in \Cref{finitedimension1}
that $\{\lambda_i\}$ is a two-sided basis for $\mathcal{R}:=N\vee
\big(N^{\prime}\cap M\big)$ as well over $N$ with respect to
${E_N}_{|_{\mathcal{R}}}$.

  Further, from \Cref{p=ep}, $M$ admits a two-sided basis consisting
  of unitaries, say, $\{\mu_j: 1 \leq j \leq |G|\}$, over
  $\mathcal{R}$, which is contained in
  $\mathcal{N}_M(\mathcal{N})$. As seen in \Cref{two-sided}, $\{\mu_j
  \lambda_i \}$ is a two-sided Pimsner-Popa basis for $M$ over $N$
  with respect to $E_N$. Thus, $\{ \lambda_i^*\mu_j^* \}$ is also
  a basis for $M$ over $N$, and we obtain
    \[
[M:N]  = \sum_{i,j} \lambda_i^* \mu_j^* \mu_j
  \lambda_i = |G| \sum_i \lambda_i^* \lambda_i = |G|  \sum_i\lambda_i \lambda_i^* = 
   |G|\, \mathrm{Ind}(\tr),
  \]
where the second last equality holds because $\{\lambda_i\}$ is a
two-sided basis for $\tr$. In particular, $\mathrm{Ind}(\tr)$ is
scalar-valued. So, if $\Lambda$ denotes the  matrix of the inclusion $\C
\subseteq N'\cap M$ and $\bar{s}=(s_1, \ldots, s_k)$ denotes the trace
vector of $\tr$, then by \cite[Corollary 2.4.3]{Wat}, there exists a
$\beta >0$ such that  $\bar{s}\, \Lambda \Lambda^t = \beta \bar{s}$
and $\mathrm{Ind}(\tr) =\beta$. 

Now, if $[n_1, \ldots, n_k]$ is the dimension vector of $N'\cap M$,
then by Watatani's convention, we have $\Lambda = [n_1, \ldots,
  n_k]^t$. Since $\sum_{i=1}^k s_i n_i = 1$, we obtain
\[
\bar{s}\, \Lambda \Lambda^t = \left(\Big(\sum_{i=1}^k s_i n_i\Big)
n_1, \Big(\sum_{i=1}^k s_i n_i\Big) n_2, \ldots, \Big(\sum_{i=1}^k s_i
n_i\Big) n_k \right) = (n_1, n_2, \ldots, n_k),
\]
which yields $\beta = \frac{n_i}{s_i}$ for all $ 1 \leq i \leq
k$. Thus, if $p_i$ denotes a minimal projection in the $i$-th summand
of $N'\cap M$ and $\tilde{p}_i$ denotes the $i$-th minimal central
projection, then $\tr(p_i) = s_i= n_i/\beta$ for all $1 \leq i \leq
k$; so, $\tr(\tilde{p_i}) = n_i^2/\beta = s_i n_i$ for all $1 \leq
i\leq k$. This gives
\[
1 = \tr(1) = \sum_{i=1}^k \tr(\tilde{p}_i) = \sum_{i=1}^k n_i^2/\beta;
\]
so that $\beta = \sum_{i=1}^k n_i^2 = \mathrm{dim}_{\C}(N'\cap M)$. Hence,
\[
[M:N] = |G|  \mathrm{dim}_{\C}(N'\cap M).
\]
This completes the proof.    \end{proof}

\end{document}